\newtheorem{theorem}{Theorem}[section]
\newtheorem{proposition}[theorem]{Proposition}
\newtheorem{lemma}[theorem]{Lemma}
\newtheorem{corollary}[theorem]{Corollary}
\newtheorem{example}[theorem]{Example}
\theoremstyle{remark}
\newtheorem{remark}[theorem]{Remark}
\numberwithin{equation}{section}
\DeclareMathOperator{\esssup}{ess-sup}
\DeclareMathOperator{\real}{Re}
\DeclareMathOperator{\imag}{Im}
\newcommand{\R}{\mathbb{R}}
\newcommand{\N}{\mathbb{N}}
\newcommand{\C}{\mathbb{C}}
\newcommand{\Q}{\mathbb{Q}}
\newcommand{\F}{\mathcal{F}}
\newcommand{\Hi}{\mathcal{H}}
\newcommand{\B}{\mathcal{B}}
\newcommand{\dih}{\int_\Omega^\oplus H_s \,d\mu(s)}
\title[Direct Integrals of Operator Semigroups]{Direct Integrals of Strongly Continuous Operator Semigroups}
\author[A.C.S.\ Ng]{Abraham C.S.\ Ng}
\address[A.C.S.\ Ng]{St Edmund Hall, Queen's Lane, Oxford OX1 4AR, UK}
\email{abraham.ng@maths.ox.ac.uk}
\subjclass[2010]{34K30, 35B40, 47D06, (47D07).}
\keywords{Direct integrals, $C_0$-semigroups, generators, asymptotics.}
\begin{document}
	
	\begin{abstract}
		The goal of this article is to develop a theory for direct integrals of $C_0$-semigroups on Hilbert spaces parallel to the recent approach by Lachowicz and Moszy\'{n}ski for direct sums of Banach spaces, diagonal operators, and semigroups. In it we deal with the existence and characterisation of semigroups, asymptotic rates, and questions of decomposability. %Ultimately, this article forms a stepping stone built from the classical theory of strongly continuous semigroups on route towards a future more complete understanding of Quantum Markov Semigroups on von Neumann algebras. %A simpler and corrective proof is also given in greater generality for an asymptotics result by Maniar and Nafiri (ESAIM Control Optim. Calc. Var. 22(1): 208-235, 2016).
	\end{abstract}
	
% (Semigroup Forum 93(1): 34-70, 2016)	
\maketitle

\section{Introduction}\label{sec:1}

The study of strongly continuous semigroups, or $C_0$-semigroups, was motivated by so-called Cauchy problems -- initial value problems regarding evolution equations. This theory applied abstract functional and operator theoretic machinery to the solving of partial differential equations (PDEs) and related problems (see \cite{ABHN,EN}). There has been a renewed interest in $C_0$-semigroups due to the discovery of abstract methods that can be used to address asymptotic questions such as the energy decay of physical systems. These have come in the form of the so-called quantified Tauberian theorems, beginning with \cite{BaDu}, exploding upon the PDE scene through  \cite{BoTo}, and largely perfected in \cite{RSS}.

In this article, we combine a recent approach to direct sums of $C_0$-semigroups, done by Lachowicz and Moszy\'nski \cite{LM}, with direct integral theory. Indeed, one can view our results as a generalisation and expansion of those in \cite{LM} when considering the setting of Hilbert spaces.

Direct integral theory has been around for a long time, originating from the study of operator algebras \cite{Dix,vN} in the 1940s-50s. In particular, Nussbaum's generalisation of von Neumann's reduction theory for bounded operators \cite{vN2} to unbounded operators \cite{Nuss} motivated much further work within direct integral theory in areas such as spectral theory \cite{Azoff,Chow,Lennon} and functional calculus \cite{CG,Gil} in the 1960s-70s. A book was compiled by Nielsen in 1980 \cite{Niels}, but since then, pure operator theory on direct integrals does not seem to have been developed as frequently. More recent additions to the theory are those of \cite{DNSZ} on $W^*$-algebras and \cite{GGST} on weak measurability.

It is somewhat surprising then, that although the development of $C_0$-semigroup theory substantially overlapped in time with that of direct integral theory, there appears to be a gap in the literature when it comes to comprehensively combining these two theories. This peculiarity is further compounded by the fact that both theories have already been largely completed for some time. Indeed, it is worth remarking that no comprehensive treatment on direct sums of $C_0$-semigroups seems to have been produced until 2016 in \cite{LM} (even though the use of infinite direct sums was already known as a tool to semigroup theorists). The closest that previous work has come to filling this gap in the literature is the study of semigroups of partial isometries that decompose into direct integrals of truncated shifts (see \cite{EW,ELW,Martin,PR,W}). This article aims to rectify this situation and properly fill the mostly empty space. Furthermore, we hope to provide an interesting first step towards further developments and applications of direct integral and $C_0$-semigroup theory in the direction of both Cauchy problems and operator algebras.

The central results of this article are the following two theorems in Section \ref{sec:4}. Theorem \ref{sgthm} states that the direct integral of a family of $C_0$-semigroups is itself a $C_0$-semigroup on the direct integral space if and only if the family is uniformly exponentially bounded. Theorem \ref{sgthmconv} states that if a decomposable operator generates a $C_0$-semigroup, then the individual operators comprising this original decomposable operator also individuallly generate $C_0$-semigroups, the direct integral of which coincides with the original $C_0$-semigroup. In particular, any $C_0$-semigroup on the direct integral space that is generated by a decomposable operator is itself decomposable. This is particularly interesting, given that there is a distinction between so-called maximally defined operators and those defined via a direct integral (see \cite{GGST} where this distinction is teased out).
%Theorem \ref{sgthmconv} states that any $C_0$-semigroup on the direct integral space that is generated by a decomposable operator is itself decomposable and is in fact the direct integral of the family of $C_0$-semigroups generated individually by the family comprising the original decomposable generator.
%This is somewhat more surprising, since one gets a sense here that the relationship of a $C_0$-semigroup and its generator forces the implication of a `pointwise a.e.\ condition' from an `$L^2$-type condition'.

Sections \ref{sec:2}-\ref{sec:3} provide the preliminaries and basic operator-theoretic results needed to tackle the central issue, which, as mentioned, is addressed in Section \ref{sec:4}. Sections \ref{sec:5}-\ref{sec:7} then deal with special cases, examples, and asymptotic questions that arise out of our central results, including a discussion on a theorem by Maniar and Nafiri \cite{MaNa} and how it follows as a simple corollary of our more general quantified asymptotics result.

From a $C_0$-semigroup theoretic perspective, our results are already deeply interesting, answering natural fundamental questions and laying the groundwork for applications to Cauchy problems, quantified asymptotic theory, and rates of decay. However, this article may also lead to two other rich areas of application, yet to be explored.

Firstly, since every von Neumann algebra on a separable Hilbert space is a direct integral of factors \cite{vN}, we hope that further progress can be made in the theory of quantum Markov semigroups (QMSs) on von Neumann algebras by building on our results for $C_0$-semigroups on direct integral spaces. QMSs, a natural generalisation of classical Markov semigroups that have interested mathematical physicists for almost fifty years, are a certain type of one-parameter operator semigroup on von Neumann algebras, originally arising out of the study of open quantum systems. Recently, there has been an interest in deriving results concerning QMSs and their generators inspired from classical strongly continuous semigroup theory (see \cite{AZ1,AZ2} for example). The key difference between QMSs and $C_0$-semigroups is that QMSs act on what are essentially spaces of operators (subspaces of $\B(H)$, the space of bounded linear operators on a Hilbert space $H$) whereas $C_0$-semigroups act directly on a Banach space $X$, leading to the use of different topologies. Nonetheless, ideas from $C_0$-semigroup theory evidently can be useful in developing an understanding of QMSs. On the theory of QMSs and their geneators, see \cite{Davies,Kraus,Lind,Stine} to name but a few milestone papers.

Secondly, direct integrals, often called fibre integrals, have recently re-appeared independently from the theory of von Neumann algebras, in the area of homogenisation theory. For examples of this, see \cite{BiSu,ChWa,CoWa,NiPa}. This provides a potential avenue for more applications of our results. See also \cite[Section 5]{Ng} for an area of potential overlap between homogenisation theory (and hence direct integral theory) and quantified asymptotics for $C_0$-semigroups.

\subsection*{Acknowledgements} The author thanks Charles Batty, David Seifert, and Stuart White for helpful discussions on the topic and production of this article. He is also grateful to the University of Sydney for funding this work through the Barker Graduate Scholarship. A further especially warm thank you goes to the reviewer for their careful reading and constructive feedback because of which this article is much improved.

\section{Preliminaries}\label{sec:2}

In this article, we use standard notation, denoting the domain, spectrum, and resolvent set of a closed linear operator $B$ acting on a Hilbert space $X$ (assumed always to be complex) by $D(B)$, $\sigma(B)$, and $\rho(B)$ respectively. The resolvent operator $(\lambda-B)^{-1}$, for $\lambda \in \rho(B)$, will usually be denoted by $R(\lambda,B)$. We say that $B$ is densely defined if $D(B)$ is dense in $X$. We also write $\B(X)$ for the space of bounded linear operators on $X$.

The following proposition is a simple fact about resolvents that will be used in the article, stated without proof and an immediate corollary of the Neumann series expansion.

\begin{proposition}\label{denseresolvent}
	Let $D$ be a dense subset of $E\subset \C$. Suppose that there exists $M>0$ such that for all $\lambda \in D$, $\lambda \in \rho(B)$ and $\|R(\lambda,B)\|\leq M$. Then $E\subset \rho(B)$ and $\|R(\lambda,B)\|\leq M$ for all $\lambda \in E$.
\end{proposition}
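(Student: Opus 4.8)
The plan is to exploit the fact that around any point of the resolvent set the resolvent admits a Neumann series expansion whose radius of convergence is controlled by the norm of the resolvent at that point. Concretely, for $\lambda_0 \in \rho(B)$ and any $\mu \in \C$ with $|\mu - \lambda_0| < \|R(\lambda_0,B)\|^{-1}$, the factorisation $\mu - B = (\lambda_0 - B)\bigl(I - (\lambda_0 - \mu)R(\lambda_0,B)\bigr)$ shows that the second factor is invertible via its geometric series, so $\mu \in \rho(B)$ and
\begin{equation*}
R(\mu,B) = \sum_{n=0}^\infty (\lambda_0 - \mu)^n R(\lambda_0,B)^{n+1}.
\end{equation*}

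Now fix $\mu \in E$. Since $D$ is dense in $E$ and $\C$ is metrisable, I would choose a sequence $(\lambda_k)$ in $D$ with $\lambda_k \to \mu$. By hypothesis each $\lambda_k \in \rho(B)$ with $\|R(\lambda_k,B)\| \le M$, so the radius of convergence above satisfies $\|R(\lambda_k,B)\|^{-1} \ge M^{-1}$. Hence for every $k$ large enough that $|\mu - \lambda_k| < M^{-1}$, the expansion applies with $\lambda_0 = \lambda_k$, giving $\mu \in \rho(B)$; as $\mu \in E$ was arbitrary, this already yields $E \subset \rho(B)$.

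For the norm bound, I would estimate the series termwise: for such $k$,
\begin{equation*}
\|R(\mu,B)\| \le \sum_{n=0}^\infty |\mu - \lambda_k|^n \|R(\lambda_k,B)\|^{n+1} \le \sum_{n=0}^\infty |\mu - \lambda_k|^n M^{n+1} = \frac{M}{1 - M|\mu - \lambda_k|}.
\end{equation*}
The left-hand side is independent of $k$, so letting $k \to \infty$, whence $|\mu - \lambda_k| \to 0$, yields $\|R(\mu,B)\| \le M$, as required.

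The argument is essentially routine once the Neumann series is in hand; the only point requiring a little care is that the estimate must be made uniform enough to survive the limit. This is handled automatically by the lower bound $M^{-1}$ on the radius of convergence, which guarantees both that $\mu$ eventually falls inside the disc of convergence around $\lambda_k$ and that the geometric factor $(1 - M|\mu - \lambda_k|)^{-1}$ tends to $1$ rather than blowing up.
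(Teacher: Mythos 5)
Your argument is correct and is precisely the one the paper has in mind: the paper states Proposition \ref{denseresolvent} without proof as ``an immediate corollary of the Neumann series expansion,'' and your write-up simply fills in that Neumann-series argument, including the uniform lower bound $M^{-1}$ on the radius of convergence and the limit $k\to\infty$ that yields $\|R(\mu,B)\|\leq M$. No gaps; nothing further is needed.
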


We will also use $1_G$ to denote the characteristic function for a set $G$ and the abbreviation `a.e.'\ to mean either `almost every' or `almost everywhere', depending on the context.

Turning now to direct integrals of Hilbert spaces, let $\Omega$ be a locally compact Hausdorff topological space and $\mu$ a $\sigma$-finite positive Borel measure on $\Omega$. An important example of this is the counting measure on $\N$ with the discrete topology.
%Let $(\Omega, \mu)$ be a complete Radon measure space where $\mu$ is a positive measure. Further, suppose that $\Omega$ is $\sigma$-finite and that there is a countable collection of measurable sets in $\Omega$ which generate the collection of $\mu$ measurable sets up to sets of measure zero.

A \textit{Hilbert (space) bundle} over \textit{base space} $\Omega$ is a pair $(\mathcal{H},\pi)$ where $\pi:\mathcal{H}\to\Omega$ is a surjection such that for all $s\in\Omega$, the \textit{fiber} of $\mathcal{H}$ at $s$, defined by $H_s= \pi^{-1}(s)$, is a Hilbert space with an inner product $\langle\cdot,\cdot\rangle_s$ and induced norm $\|\cdot\|_s$. A \textit{section} $x$ of $\Hi$ is a function $x:\Omega \to \mathcal{H}$ such that $\pi\circ x(s) = s$ for all $s\in\Omega.$ Henceforth, $\pi$ will be notationally supressed and the Hilbert space bundle will be referred to by $\Hi$ alone. A \textit{measurable field of Hilbert spaces} is a pair $(\mathcal{H},\mathcal{F})$ where $\mathcal{H}$ is a Hilbert bundle and $\mathcal{F}$ is a collection of sections satisfying the following conditions:

%$\mathcal{H}$ over \textit{base space} $\Omega$ is an assignment that for each $s \in \Omega$ assigns a Hilbert space $H_s$ with an inner product $\langle\cdot,\cdot\rangle_s$ and a norm $\|\cdot\|_s$. The \textit{fiber} of the bundle $\mathcal{H}$ at $s \in \Omega$ is the Hilbert space $H_s$ and a \textit{section} $x$ of a bundle $\Hi = \{H_s : s \in \Omega\}$ is a function
%$$x : \Omega \to \displaystyle\coprod_{s \in \Omega} H_s$$ such that $x(s) \in H_s$ for each $s\in\Omega$. A \textit{measurable field of Hilbert spaces} is a pair $(\mathcal{H},\mathcal{F})$ where $\mathcal{H}$ is a Hilbert bundle and $\mathcal{F}$ is a collection of sections satisfying the following conditions:
\begin{enumerate}
	\item For all $x,y \in \mathcal{F}$, the $\C$-valued function $s \to \langle x(s),y(s)\rangle_s$ is $\mu$-measurable;
	\item If $z$ is a section such that for all $x\in \F$, the function $s \to \langle x(s),z(s)\rangle_s$ is $\mu$-measurable, then $z \in \F$; and
	\item There exists a sequence $\{\xi_i\}_{i=1}^\infty \subset \F$ called a \textit{fundamental sequence} for $\Hi$ such that for all $s \in \Omega$, the elements $\{\xi_i(s)\}_{i=1}^\infty$ are dense in $H_s$.
\end{enumerate}
$\F$ is called a choice of \textit{measurable sections} of $\Hi$.

Given a measurable field of Hilbert spaces $(\Hi,\F)$ with associated measure $\mu$, the \textit{direct integral}
$$\int_{\Omega}^\oplus H_s\, d\mu(s)$$ is the Hilbert space of all measurable sections $x \in \F$ such that
$$\|x\|^2 = \int_\Omega \|x(s)\|_s^2 \, d\mu(s) < \infty$$ modulo measurable sections that are $0$ $\mu$-a.e.\ and equipped with the inner product
$$\langle x,y\rangle = \int_\Omega \langle x(s),y(s)\rangle_s \,d\mu(s).$$

We state some simple properties without proof (see \cite[Section 3]{Ga} and \cite[Lemma 35]{Y}).

\begin{proposition}\label{sim}
	Let $\mu$  and $\Omega$ be as above. The following statements are true.
	\begin{enumerate}[(i)]
		\item If $\mathcal{M}=\{M_k\}_{k=1}^\infty$ is a countable collection of sets with finite measure such that $\Omega = \bigcup_{k=1}^\infty M_k$, the triple sequence given by
		$$\xi_{i,j,k}(\cdot)=\xi_i(\cdot) 1_{\{s \in \Omega : \|\xi_i(s)\|_s \leq j\}}(\cdot)1_{M_k}(\cdot)$$ is also a fundamental sequence after rearrangement into a sequence of one parameter. Furthermore, this fundamental sequence has elements that are pointwise bounded and have finite $\dih$ norm. Hence, from now on we can assume that the fundamental sequence is pointwise bounded and contained in $\dih$.
		\item There exists a sequence $\{e_i\}_{i=1}^\infty$ of measurable sections such that for every $s \in \Omega$, the vectors $e_i(s)$ form an orthonormal basis for the fiber $H_s$.
		\item $\{1_{M_k}e_i : i,k \in \N\}$ is a countable subset such that its span is dense in $\int_\Omega^\oplus H_s\, d\mu(s)$. Hence, $\dih$ is separable.
	\end{enumerate}
\end{proposition}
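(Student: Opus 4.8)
The plan is to handle the three parts in order, using the fundamental sequence from (i) as the input to (ii) and (iii). For (i), I would first verify each section $\xi_{i,j,k}$ lies in $\F$ via the measurability axiom (2): for arbitrary $x\in\F$ the scalar function $s\mapsto\langle x(s),\xi_{i,j,k}(s)\rangle_s$ equals $\langle x(s),\xi_i(s)\rangle_s\,1_{\{\|\xi_i(s)\|_s\le j\}}(s)\,1_{M_k}(s)$, a product of three measurable functions (the first by axiom (1), the second since $\{s:\langle\xi_i(s),\xi_i(s)\rangle_s\le j^2\}$ is measurable by axiom (1), the third since $M_k$ is Borel). Pointwise boundedness is immediate as $\|\xi_{i,j,k}(s)\|_s\le j$, and then $\|\xi_{i,j,k}\|^2\le j^2\mu(M_k)<\infty$ places each section in $\dih$. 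For the fundamental property I would fix $s$ and $v\in H_s$: density of $\{\xi_i(s)\}$ supplies some $\xi_i(s)$ close to $v$, and choosing $k$ with $s\in M_k$ and $j\ge\|\xi_i(s)\|_s$ recovers $\xi_{i,j,k}(s)=\xi_i(s)$, so the truncated, localised family is still pointwise dense in $H_s$.

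For (ii), the natural route is a pointwise Gram--Schmidt procedure on the relabelled one-parameter fundamental sequence $\{\eta_n\}$ from (i). I would set $f_1=\eta_1$ and recursively $f_n(s)=\eta_n(s)-\sum_{m<n}\langle\eta_n(s),f_m(s)\rangle_s\,\|f_m(s)\|_s^{-2}\,f_m(s)$, interpreting a summand as $0$ whenever $f_m(s)=0$, and then $e_n(s)=f_n(s)/\|f_n(s)\|_s$ on $\{f_n\ne 0\}$ with $e_n(s)=0$ elsewhere. The work is an induction showing each $f_n$, hence each $e_n$, remains a measurable section: the coefficients $s\mapsto\langle\eta_n(s),f_m(s)\rangle_s$ and $s\mapsto\|f_m(s)\|_s^2$ are measurable by axiom (1), the degeneracy sets $\{f_m=0\}$ are measurable, and axiom (2) then certifies $f_n,e_n\in\F$. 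For each fixed $s$ the nonzero $e_n(s)$ are orthonormal by construction and their closed span equals that of $\{\eta_n(s)\}$, which is dense in $H_s$, so they form an orthonormal basis; where $\dim H_s<\infty$ the surplus vectors vanish, and the phrase ``orthonormal basis'' is read in this generalised sense.

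For (iii), I would argue by orthogonality. Suppose $x\in\dih$ satisfies $x\perp 1_{M_k}e_i$ for all $i,k$; writing $c_i(s)=\langle x(s),e_i(s)\rangle_s$ (which lies in $L^2(\Omega)$ since $|c_i|\le\|x(\cdot)\|_\cdot$), this says $\int_{M_k}c_i\,d\mu=0$ for every $k$. Enlarging $\{M_k\}$ to the countable $\pi$-system of its finite intersections (still finite-measure sets, and generating the Borel $\sigma$-algebra once $\Omega$ is taken second countable), a standard Dynkin-class uniqueness argument, applied to real and imaginary parts, forces $c_i=0$ a.e.\ for each $i$. Taking the union of these countably many null sets shows $x(s)\perp e_i(s)$ for all $i$ at a.e.\ $s$, whence $x(s)=0$ a.e.\ by completeness of $\{e_i(s)\}$, so $x=0$. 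Thus the closed span of $\{1_{M_k}e_i\}$ is all of $\dih$, and finite combinations with coefficients in $\Q+i\Q$ form a countable dense set, giving separability.

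The step I expect to be most delicate is the measure-theoretic core of (iii): passing from ``the integral over each $M_k$ vanishes'' to ``the integrand vanishes a.e.'' genuinely requires the indexing family to generate the $\sigma$-algebra -- a bare cover such as $\{[-k,k]\}$ in $\R$ is not enough -- so the honest argument needs second countability of $\Omega$ together with closing $\{M_k\}$ under finite intersections. By comparison, the measurability induction in (ii) is routine but must be tracked carefully around the sets where the orthogonalised vectors degenerate to $0$.
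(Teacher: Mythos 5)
The paper itself states this proposition without proof, deferring to \cite[Section 3]{Ga} and \cite[Lemma 35]{Y}, so your argument can only be measured against the standard ones. Your parts (i) and (ii) are correct and follow exactly the standard route: truncation and localisation for (i), and a pointwise Gram--Schmidt with a measurability induction for (ii), including the right handling of the degeneracy sets $\{f_m=0\}$ and of finite-dimensional fibres.

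Part (iii), however, contains a genuine gap, one you have in fact half-noticed yourself. Your Dynkin-class argument needs the complex measures $E\mapsto\int_{E\cap M_k}c_i\,d\mu$ to vanish on a $\pi$-system that generates the Borel $\sigma$-algebra up to $\mu$-null sets, and this fails twice over. First, orthogonality of $x$ to the sections $1_{M_k}e_i$ only gives vanishing on the sets $M_k$ themselves; it says nothing about intersections $M_k\cap M_l$, so ``enlarging $\{M_k\}$ to the $\pi$-system of finite intersections'' is either unlicensed (if the family in (iii) is kept as stated) or silently changes the statement (if the family is enlarged). Second, and more fundamentally, even the enlarged $\pi$-system need not generate the Borel $\sigma$-algebra, second countable or not: your own example $M_k=[-k,k]$ in $\R$ is \emph{already} closed under finite intersections, and there the claim is simply false --- taking $H_s=\C$ and $e_1\equiv 1$, the closed span of $\{1_{[-k,k]} : k\in\N\}$ in $L^2(\R)$ consists of even functions, so every odd function is orthogonal to it. Thus your concluding repair (``second countability together with closing $\{M_k\}$ under finite intersections'') is contradicted by the very example you cite. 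The honest fix is to change the countable family, not merely close it up: in a second countable $\Omega$ take a countable base $\{B_j\}$ closed under finite intersections and use the family $\{1_{B_j\cap M_k}e_i\}\cup\{1_{M_k}e_i\}$; then for fixed $i,k$ the complex measure $\nu(E)=\int_{E\cap M_k}c_i\,d\mu$ vanishes on a generating $\pi$-system and on $\Omega$, Dynkin's theorem gives $\nu=0$, hence $c_i=0$ a.e.\ on each $M_k$, and the rest of your argument goes through. This analysis also shows that the proposition as printed is imprecise: (iii) cannot hold for an arbitrary finite-measure cover $\{M_k\}$, and separability genuinely requires a countably generated Borel $\sigma$-algebra --- an assumption in force in the cited references but absent from Section \ref{sec:2} of this paper.
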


Given $H = \dih,$ if $\Omega' \subset\Omega$ is measurable and $\mu(\Omega') >0$, then we can form the direct integral $\int_{\Omega'}^{\oplus}H_s\,d\mu(s)$ using the same definition as before, with $\Omega'$ replacing $\Omega$. $\int_{\Omega'}^{\oplus}H_s\,d\mu(s)$ can be identified with the subspace of $H$ given by
$$\left\{x \in \dih : x(s) = 0 \text{ for a.e.\ } s \notin \Omega'\right\}.$$
$\int_{\Omega'}^\oplus H_s \,d\mu(s)$ is closed, since if $\{x_n\}_{n=1}^\infty \subset \int_{\Omega'}^\oplus H_s \,d\mu(s)$ and $x_n \to x$ in $H$, then there exists a subsequence such that $x_{n_k}(s) \to x(s)$ in $H_s$ for a.e.\ $s \in \Omega$. In particular, $x(s) = 0$ for a.e.\ $s \notin \Omega'$. Hence $x \in \int_{\Omega'}^\oplus H_s\, d\mu(s)$.

\section{Direct Integrals of Operators}\label{sec:3}

In most discussions on direct integrals, for a given measurable field $(\Hi,\F)$, a family of bounded operators $\{T(s) \in \B(H_s) : s \in \Omega\}$ forms a \textit{measurable field of operators} if the function
$$s \mapsto \langle T(s)x(s),y(s)\rangle_s$$ is measurable for all $x,y \in \F$. Some treatments also require that $$s\mapsto \|T(s)\|_{\B(H_s)}$$ be essentially bounded. When this extra condition is satisfied, we shall call $\{T(s) \in \B(H_s) : s \in \Omega\}$ a \textit{bounded measurable field of operators}. We will not go into a discussion of measurable fields and direct integrals of bounded operators, these can be found in \cite{Dix,Niels,Take}.

The standard generalisation to closed operators, first given by Nussbaum \cite{Nuss}, is through the characteristic matrices introduced by Stone \cite{St}. The characteristic matrix $(P_{i,j})$ of a closed operator $A$ on a Hilbert space $H$ is the $2 \times 2$ matrix of bounded operators representing the projection $P$ of $H \times H$ onto the closed subspace $\Gamma(A)$, the graph of $A$. Nussbaum defined a \textit{measurable field of closed operators} to be a family of closed operators $\{A(s) :D(A(s))\subset H_s \to H_s : s \in \Omega\}$ such that for each $i,j$, the family $\{P_{i,j}(s) : s \in \Omega\}$ forms a measurable field of bounded operators. This is shown to be consistent with the bounded case in \cite[Proposition 6]{Nuss}

We provide an alternative definition which turns out to be equivalent modulo a resolvent condition. We define a \textit{measurable field of closed operators} to be a family of closed operators $\{A(s) :D(A(s))\subset H_s \to H_s : s \in \Omega\}$ satisfying the requirement that there exists a fixed $\nu \in \C$ such that for a.e.\ $s\in\Omega$, $\nu \in \rho(A(s))$ and the family of resolvents $\{R(\nu,A(s)) : s\in \Omega\}$ adjusted on a set of measure zero is a bounded measurable field of operators.

When there exists such a $\nu \in \C$, the equivalence of definitions follows from the simple fact that adding scalar multiples of the identity does not change measurability in either definition and that inverses of Nussbaum measurable fields are measurable when they exist because the characteristic matrix of the inverse is merely a rearrangement of the components (see \cite[Proposition 5]{Nuss}). Note that since
$$R(\lambda,B) = (\lambda - \nu + R(\nu,B)^{-1})^{-1},\quad (\lambda,\nu \in \rho(B)),$$ for any operator $B$, $\{R(\nu,A(s)) : s\in \Omega\}$ is Nussbaum measurable if and only if $\{R(\lambda,A(s)) : s\in \Omega\}$ is Nussbaum measurable for all $\lambda,\nu \in \rho(A(s))$ for a.e.\ $s\in\Omega$.

We can now define the \textit{direct integral of a measurable field of closed operators} $\{A(s):D(A(s)) \subset H_s \to H_s : s \in \Omega\}$ on the maximal reasonable domain in $\int_\Omega^\oplus H_s \,d\mu(s)$ as the operator $A$ given by
\begin{align*}
	D(A) & = \bigg\{x \in \int_\Omega^\oplus H_s \,d\mu(s) : x(s) \in D(A(s)) \text{ for a.e.\ } s \in \Omega, \\ & \hspace{15 em} A(\cdot)x(\cdot) \in \int_\Omega^\oplus H_s\, d\mu(s)\bigg\}, \\
	A & = A(\cdot)x(\cdot), \quad x \in D(A),
\end{align*} and denoted by $\int_\Omega^\oplus A(s)\, d\mu(s)$.

In accordance with this definition, if we write $A = \int_{\Omega}^{\oplus}A(s)\,d\mu(s)$, there is an implicit assumption that there exists at least one $\lambda\in\C$ such that for a.e.\ $s\in\Omega$ $\lambda \in \rho(A(s))$ and that $s\mapsto\|R(\lambda,A(s))\|_{\B(H_s)}$ is essentially bounded.

Given $A = \int_{\Omega}^{\oplus}A(s)\,d\mu(s)$, if $\Omega' \subset\Omega$ is measurable and $\mu(\Omega') > 0$, then we can form the direct integral of closed operators $A_{\Omega'} = \int_{\Omega'}^{\oplus}A(s)\,d\mu(s)$ on $\int_{\Omega'}^{\oplus}H_s\,d\mu(s)$ using the same definitions as before, with $\Omega'$ replacing $\Omega$. When considering $\int_{\Omega'}^{\oplus}H_s\,d\mu(s)$ as a closed subspace of $\dih$, $A_{\Omega'}$ coincides with the restriction of $A$ to $\int_{\Omega'}^{\oplus}H_s\,d\mu(s)$, that is,
$$D(A_{\Omega'}) = \int_{\Omega'}^\oplus H_s \,d\mu(s) \cap D(A) \ \ \text{ and } \ \
A_{\Omega'} = A|_{D(A_{\Omega'})}.$$

Note that there may exist operators $A$ on a direct integral space $H=\dih$ that are not themselves, the direct integral of a measurable field of closed operators. To see this, take $\Omega = \{1,2\}$ with the discrete measure and $(\Hi,\pi)$ to be the bundle given by $H_1=H_2 = \C$. Then $\C^2$ can be considered as the bundle $\Hi$. The linear operators on $\Hi$ are the $2\times 2$ matrices and only the diagonal matrices are direct integrals of operators on $H_1$ and $H_2$. In the case, however, where $A = \int_{\Omega}^{\oplus}A(s)\,d\mu(s)$ for a measurable field of closed operators $\{A(s) :D(A(s))\subset H_s \to H_s : s \in \Omega\}$, $A$ is said to be \textit{decomposable}. Decomposable operators are characterised in \cite[Corollary 4]{Nuss} as those closed operators that commute with every so-called bounded diagonalisable operator.

We now provide some basic properties of direct integrals of operators. First, we deal with boundedness.%This result is stated as `easy to see' in \cite[Section 3]{Chow}, but nonetheless, we fill in the details.

\begin{proposition}\label{bddprop}
	Let $A = \int_{\Omega}^{\oplus}A(s)\,d\mu(s)$ be densely defined and $M\geq0$. The following are equivalent:
	\begin{enumerate}[(i)]
		\item $A(s) \in \B(H_s)$ for a.e.\ $s\in\Omega$ and $\underset{s\in \Omega}{
			\esssup}\|A(s)\|_{\B(H_s)} \leq M$.
		\item $\int_\Omega^\oplus A(s)\, d\mu(s) \in \B\left(\int_\Omega^\oplus H_s\, d\mu(s)\right)$ and $\|\int_\Omega^\oplus A(s) \,d\mu(s)\| \leq M$.
	\end{enumerate}
	Note that $s \mapsto \|A(s)\|_{\B(H_s)}$ is measurable since $\dih$ is separable so that $\|A(s)\|_{\B(H_s)}$ is a supremum of countably many measurable functions of the form $\langle A(s)x(s),A(s)x(s)\rangle_s^{1/2}$.
\end{proposition}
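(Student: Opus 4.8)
The plan is to prove the two implications separately; the implication (i) $\Rightarrow$ (ii) is a direct computation, while (ii) $\Rightarrow$ (i) carries the real content. For (i) $\Rightarrow$ (ii) I would first note that for any $x \in \dih$ the section $s \mapsto A(s)x(s)$ is measurable: since $\{A(s)\}$ is a measurable field of operators, $s \mapsto \langle A(s)x(s), y(s)\rangle_s$ is measurable for every $y \in \F$, and hence $A(\cdot)x(\cdot) \in \F$ by axiom (2) in the definition of a measurable field of Hilbert spaces. As $x(s) \in H_s = D(A(s))$ a.e.\ and
\[ \int_\Omega \|A(s)x(s)\|_s^2\, d\mu(s) \leq M^2\int_\Omega \|x(s)\|_s^2\, d\mu(s) = M^2\|x\|^2 < \infty, \]
the section $A(\cdot)x(\cdot)$ lies in $\dih$, so $x \in D(A)$. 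Thus $D(A) = \dih$ and $\|A\| \leq M$.

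For (ii) $\Rightarrow$ (i) I would argue via localisation followed by an appeal to the closedness of the fibre operators. Fix the fundamental sequence $\{\xi_i\}$, which by Proposition \ref{sim}(i) may be taken pointwise bounded and contained in $\dih = D(A)$ (recall that $A \in \B(\dih)$ forces $D(A) = \dih$). For each fixed $x \in D(A)$ and each measurable $G \subset \Omega$, the section $1_G x$ again lies in $D(A) = \dih$ with $(A(1_G x))(s) = 1_G(s)A(s)x(s)$ a.e., so $\|A\| \leq M$ gives
\[ \int_G \big(\|A(s)x(s)\|_s^2 - M^2\|x(s)\|_s^2\big)\, d\mu(s) \leq 0. \]
Since $G$ is arbitrary, the integrand is $\leq 0$ a.e., i.e.\ $\|A(s)x(s)\|_s \leq M\|x(s)\|_s$ a.e.\ for this fixed $x$. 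I would then apply this to the countable set $\mathcal{D}_0$ of all finite combinations of the $\xi_i$ with coefficients in $\Q + i\Q$ and discard the (countably many) resulting null sets, so that for a.e.\ $s$ one has $\xi_i(s) \in D(A(s))$ for all $i$ and $\|A(s)v\|_s \leq M\|v\|_s$ for every $v \in \{w(s) : w \in \mathcal{D}_0\}$.

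The main obstacle is the final passage from this bound on a countable set to the conclusion that $A(s)$ is \emph{bounded} with $\|A(s)\|_{\B(H_s)} \leq M$ a.e. For a.e.\ fixed $s$, the set $\{w(s) : w \in \mathcal{D}_0\}$ is a dense $(\Q + i\Q)$-linear subset of the complex span $V_s := \spann\{\xi_i(s) : i\}$, which is itself dense in $H_s$; approximating complex coefficients by rational ones and using the linearity of $A(s)$ on $V_s \subset D(A(s))$ together with the continuity of finite scalar combinations, I would upgrade the estimate to $\|A(s)v\|_s \leq M\|v\|_s$ for all $v \in V_s$. Then $A(s)|_{V_s}$ is a densely defined bounded operator of norm at most $M$, whose continuous extension $B_s \in \B(H_s)$ satisfies $\|B_s\| \leq M$; since $A(s)$ is closed and extends $A(s)|_{V_s}$, it contains the graph closure $B_s$, and as $B_s$ is everywhere defined we conclude $A(s) = B_s$. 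This yields $\|A(s)\|_{\B(H_s)} \leq M$ a.e., hence $\esssup_s \|A(s)\|_{\B(H_s)} \leq M$, which is (i); the measurability needed to interpret this essential supremum is exactly the observation recorded at the end of the statement. The genuinely delicate points are the careful bookkeeping of null sets under the countable reductions and the use of closedness of the $A(s)$ (rather than any global property of $A$) to turn a dense-subspace estimate into fibrewise boundedness.
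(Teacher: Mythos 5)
Your proposal is correct. In substance it uses the same ingredients as the paper's proof --- the direct norm computation for (i) $\Rightarrow$ (ii), and for (ii) $\Rightarrow$ (i) a countable, pointwise-dense family of rational-coefficient combinations of measurable sections plus null-set bookkeeping --- but the organisation differs, and in one place your version is more complete. The paper argues (ii) $\Rightarrow$ (i) by contrapositive: taking the rational combinations $f_i$ of the orthonormal sections $e_i$ from Proposition \ref{sim}(ii), it sets $K_i = \{s\in\Omega : \|A(s)f_i(s)\|_s > M\|f_i(s)\|_s\}$, notes that failure of (i) gives $\mu(K_j)>0$ for some $j$, and exhibits the single witness $x' = f_j 1_{K_j}$ with $\|Ax'\|>M\|x'\|$. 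Your localisation argument --- $\int_G\big(\|A(s)x(s)\|_s^2 - M^2\|x(s)\|_s^2\big)\,d\mu(s)\le 0$ for every measurable $G$, hence the integrand is $\le 0$ a.e.\ --- is the measure-theoretic dual of that witness construction, so the core mechanism is identical. What you add is the final fibrewise step: the paper asserts without comment that the a.e.\ bound on the countable dense set $\{f_i(s)\}$ forces $\|A(s)\|_{\B(H_s)}\le M$, and this upgrade really does require the closedness of $A(s)$, exactly as you argue (the bounded closure $B_s$ of $A(s)|_{V_s}$ is everywhere defined and contained in the closed operator $A(s)$, so $A(s)=B_s\in\B(H_s)$). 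Making that explicit fills a genuine gap in the paper's exposition. A second, smaller advantage: your test sections are cut-offs of elements of $\dih$ and so automatically have finite norm, whereas the paper's witness $f_j 1_{K_j}$ need not lie in $\dih$ when $\mu(K_j)=\infty$ (the $f_j$ have constant pointwise norm by orthonormality of the $e_i(s)$); the paper's construction implicitly needs $K_j$ replaced by $K_j\cap M_k$ for a suitable finite-measure set $M_k$ from Proposition \ref{sim}(i), a repair your argument never requires.
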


\begin{proof}
	Assume (i). Then for any $x \in D(A)$,
	$$\|Ax\|^2 = \int_\Omega \|A(s)x(s)\|_s^2\,d\mu(s) \leq M^2 \int_\Omega \|x(s)\|_s^2\, d\mu(s) = (M\|x\|)^2,$$ and we are done by density of $D(A)$.
	
	For the converse, we must take care that any offending function we construct is measurable. Let $Q=\{a+bi : a,b\in\Q\}$. Taking the set $\{e_i\}_{i=1}^\infty$ from Proposition \ref{sim}, $$\left\{\sum_{i=1}^n  a_i e_i : a_i \in Q, n \in \N\right\}$$ is a countable set which we relabel as $\{f_i\}_{i=1}^\infty$ such that $\{f_i(s)\}_{i=1}^\infty$ is dense in $H_s$ for all $s \in \Omega$. Let $$K_i := \left\{s\in\Omega : \|A(s)f_i(s)\|> M\|f_i(s)\|_s\right\}, \quad (i \in\N).$$ These sets are clearly measurable. If for a.e.\ $s\in \Omega$, $\|A(s)f_i(s)\|\leq M\|f_i(s)\|_s$ for all $i \in \N$, then $\|A(s)\|\leq M$ for a.e.\ $s$. Thus, if (i) fails, there exists $j\in\N$ such that $\mu(K_j) >0$.
	
	Let $x'(\cdot) := f_j(\cdot)1_{K_j}(\cdot)$ and note that it is measurable and in $\dih$. Since
	$$\|Ax'\|^2 = \int_{K_j} \|A(s)f_j(s)\|_s^2 \,d\mu(s) > M^2\int_{K_j}\|f_j(s)\|_s^2\,d\mu(s) = (M\|x'\|)^2,$$  (ii) also fails.
\end{proof}

Second, we state a theorem on adjoints and inverses.%We begin with closedness.

\begin{theorem}
	Suppose that $A = \int_{\Omega}^{\oplus}A(s)\,d\mu(s)$. Then the following are true.
	\begin{enumerate}[(i)]
		\item $A^*$ exists if and only if $A(s)^*$ exists a.e.,\ in which case $$A^* = \int_{\Omega}^\oplus A(s)^*\,d\mu(s).$$
		\item $A^{-1}$ exists (as a bounded operator) if and only if $A(s)^{-1}$ exists and is essentially bounded, in which case $$A^{-1} = \int_{\Omega}^\oplus A(s)^{-1}\,d\mu(s).$$
	\end{enumerate}
\end{theorem}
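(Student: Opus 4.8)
The plan is to establish (ii) first, read off from its forward implication the resolvent identity $R(\nu,A)=\int_\Omega^\oplus R(\nu,A(s))\,d\mu(s)$, and then bootstrap to (i). For the forward direction of (ii), assume $A(s)^{-1}$ exists for a.e.\ $s$ and $s\mapsto\|A(s)^{-1}\|$ is essentially bounded. Since $A=\int_\Omega^\oplus A(s)\,d\mu(s)$ comes with some $\lambda_0$ for which $\{R(\lambda_0,A(s))\}$ is a bounded measurable field, the resolvent remark preceding the theorem shows $\{A(s)^{-1}\}=\{-R(0,A(s))\}$ is also a bounded measurable field, so $C:=\int_\Omega^\oplus A(s)^{-1}\,d\mu(s)$ is bounded by Proposition~\ref{bddprop}. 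A fiberwise computation then gives $ACy=y$ for every $y\in\dih$ (as $A(s)^{-1}y(s)\in D(A(s))$ with $A(s)A(s)^{-1}y(s)=y(s)$) and $CAx=x$ for $x\in D(A)$, whence $A^{-1}=C$. Applying this to $\nu-A=\int_\Omega^\oplus(\nu-A(s))\,d\mu(s)$, whose fibers are boundedly invertible by hypothesis, yields $\nu\in\rho(A)$ and the resolvent identity above.

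For the converse of (ii), suppose $A^{-1}\in\B(\dih)$. As $A$ is decomposable it commutes with every bounded diagonalisable operator $D$, and writing $x=A^{-1}y$ one checks $A^{-1}D=DA^{-1}$; thus $A^{-1}$ is decomposable by \cite[Corollary 4]{Nuss}, say $A^{-1}=\int_\Omega^\oplus B(s)\,d\mu(s)$ with $\|B(s)\|$ essentially bounded. It remains to identify $B(s)$ with $A(s)^{-1}$, and I expect this to be the main obstacle. Evaluating $ACy=y$ along a countable family $\{f_i\}$ with $\{f_i(s)\}$ dense in each $H_s$ gives $A(s)B(s)=I$ for a.e.\ $s$; evaluating $A^{-1}Ax=x$ along $x_i:=R(\nu,A)f_i$, whose fibers $R(\nu,A(s))f_i(s)$ are dense in $D(A(s))$ for the graph norm, gives $B(s)A(s)=I$ on a graph-dense subset of $D(A(s))$, and graph-norm continuity of $B(s)A(s)$ extends this to all of $D(A(s))$. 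Hence $A(s)^{-1}=B(s)$ a.e. The delicate point is precisely this promotion of a.e.\ relations on countably many sections to genuine fiberwise inverses, which rests on the graph-norm density manufactured from the resolvent.

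For (i), I would obtain the equivalence through projections. Let $Q(s)$ be the orthogonal projection onto $\overline{D(A(s))}=\overline{\Ran R(\nu,A(s))}$; this is a bounded measurable field of projections, so $Q=\int_\Omega^\oplus Q(s)\,d\mu(s)$ is a decomposable projection. Every $x\in D(A)$ has $x(s)\in D(A(s))$ a.e., so $\overline{D(A)}\subseteq\Ran Q$; conversely, if $Q(s)=I$ a.e.\ then for $y\perp D(A)$ the identities $\langle y,R(\nu,A)(1_Gf_i)\rangle=0$ over all finite-measure $G$ force $\langle y(s),R(\nu,A(s))f_i(s)\rangle_s=0$ a.e., so $y(s)\perp\overline{D(A(s))}=H_s$ a.e.\ and $D(A)$ is dense. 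Since $Q=I$ iff $Q(s)=I$ a.e.\ by Proposition~\ref{bddprop}, we conclude that $A$ is densely defined iff $A(s)$ is densely defined a.e., i.e.\ $A^*$ exists iff $A(s)^*$ exists a.e.

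For the formula in (i), the identity $R(\bar\nu,A(s)^*)=R(\nu,A(s))^*$ exhibits $\{A(s)^*\}$ as a measurable field (the adjoint field of a bounded measurable field is again one), so $\int_\Omega^\oplus A(s)^*\,d\mu(s)$ is defined. Combining the Hilbert-space identity $R(\bar\nu,A^*)=R(\nu,A)^*$, the bounded adjoint rule $(\int_\Omega^\oplus T(s)\,d\mu(s))^*=\int_\Omega^\oplus T(s)^*\,d\mu(s)$, and the resolvent identity from (ii) then gives $R(\bar\nu,A^*)=R(\bar\nu,\int_\Omega^\oplus A(s)^*\,d\mu(s))$; since two closed operators sharing a resolvent at a common point of their resolvent sets coincide, $A^*=\int_\Omega^\oplus A(s)^*\,d\mu(s)$. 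Throughout, both parts lean on the standard but non-elementary measurability facts for direct integrals, namely measurability of the range and kernel projections of a measurable field and of the associated adjoint and inverse fields.
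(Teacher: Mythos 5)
Your proposal is correct in substance, but it takes a genuinely different route from the paper. The paper's proof is essentially a citation: both parts are already known for Nussbaum's definition of a measurable field of closed operators (\cite[Proposition 3.5]{Chow}, \cite[Theorem 3]{Nuss}), so the author only has to reconcile that definition with the resolvent-based one used here, namely to check that the implicit resolvent condition carries over to the adjoint field via $\rho(B^*)=\{\overline{\lambda}:\lambda\in\rho(B)\}$ and $R(\overline{\lambda},B^*)=R(\lambda,B)^*$, boundedness taking care of part (ii). You instead reprove everything from scratch: a fibrewise computation plus closedness of $A(s)$ for the easy direction of (ii); the commutant characterisation \cite[Corollary 4]{Nuss} plus a graph-density bootstrap along $\{R(\nu,A(s))f_i(s)\}$ to identify the fibres of $A^{-1}$ in the converse (this identification step is indeed the crux, and your handling of it is sound); and a projection-field argument for the equivalence of dense definedness in (i). What your approach buys is self-containedness, and as a by-product it delivers the resolvent identity $R(\nu,A)=\int_\Omega^\oplus R(\nu,A(s))\,d\mu(s)$, which the paper only records afterwards in Corollary \ref{specprop}(i). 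What it costs is reliance on two auxiliary facts not established in the paper: first, that a countable fibrewise-dense family can be arranged inside $\dih$ -- this is harmless, being exactly the cutoff construction of Proposition \ref{sim}(i) used in the proof of Proposition \ref{bddprop}; second, and more substantially, that $s\mapsto Q(s)$, the projection onto $\overline{D(A(s))}=\overline{\Ran R(\nu,A(s))}$, is a measurable field. That fact is standard (measurable Gram--Schmidt, or range projections in the von Neumann algebra of decomposable operators) but lies outside the paper's toolkit, so as written it is a borrowed lemma rather than a gap. If you want to avoid it, note that the only place it does real work -- dense $D(A)$ forces dense $D(A(s))$ a.e.\ -- can be handled in the style of Proposition \ref{bddprop}: the functions
$$d_i(s)=\lim_{n\to\infty}\inf\Big\{\big\|f_i(s)-\textstyle\sum_{j\leq n}c_jR(\nu,A(s))f_j(s)\big\|_s : c_j\in\Q+i\Q\Big\}$$
are measurable, and if some $d_i>\delta>0$ on a set $G$ of finite positive measure, then every $x\in D(A)$ satisfies $\|f_i1_G-x\|\geq\delta\mu(G)^{1/2}$, contradicting density of $D(A)$.
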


\begin{proof}
	\cite[Proposition 3.5]{Chow} or \cite[Theorem 3]{Nuss} proves this theorem for the Nussbaum case. Since we have the assumption of boundedness in (ii), all that remains to show is that the resolvent conditions also hold for (i). This follows easily from the fact that $\rho(B^*) = \{\overline{\lambda} : \lambda \in \rho(B)\}$ and $R(\overline{\lambda},B^*) = R(\lambda,B)^*$ for any unbounded densely defined operator $B$.
\end{proof}

Note that this also implies that if $A = \int_{\Omega}^{\oplus}A(s)\,d\mu(s)$, $A$ is densely defined if and only if $D(A(s))$ is densely defined for a.e.\ $s\in\Omega$ since an operator $B$ is densely defined if and only if $B^{*}$ exists.

Some basic spectral properties of direct integrals of operators follow immediately from this.

\begin{corollary}\label{specprop}
	Suppose that $A = \int_\Omega^\oplus A(s) \,d\mu(s)$. The following statements are true.
	\begin{enumerate}[(i)]
		% \item $\sigma_p(A) =\Big\{\lambda \in \C : \mu\big(\big\{s\in\Omega : \lambda \in \sigma_p(A(s))\big\}\big)>0\Big\}$;
		\item $\begin{aligned}[t]\rho(A) = \Big\{\lambda \in \C : \ & \lambda \in \rho(A(s)) \text{ for a.e.\ } s \in \Omega \\ & \text{ and } \underset{s\in \Omega}{\esssup}\|R(\lambda,A(s))\|_{B(H_s)} < \infty\Big\}\end{aligned}$ \\ and $$R(\lambda,A) = \int_\Omega^\oplus R(\lambda,A(s))\,d\mu(s), \quad (\lambda \in \rho(A)).$$
		\item $\begin{aligned}[t] \sigma(A) = \Big\{\lambda \in \C : & \text{ there exists } G \subset \Omega\\ &  \text{ such that } \mu(G)>0 \text{ and } \lambda \in \sigma(A(s)) \text{ for all } s \in G \Big\} \\ & \hspace{3em} \cup \Big\{\lambda \in \C : \lambda \in \rho(A(s)) \text{ for a.e.\ } s \in \Omega \\ & \hspace{6 em} \textrm{ and } \underset{s\in \Omega}{\esssup}\|R(\lambda,A(s))\|_{B(H_s)} = \infty\Big\}.\end{aligned}$
	\end{enumerate}
\end{corollary}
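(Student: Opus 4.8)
The plan is to derive both statements from part (ii) of the preceding theorem applied to the shifted field $\{\lambda - A(s) : s \in \Omega\}$, and then to obtain (ii) as the logical complement of (i). First I would fix $\lambda \in \C$ and observe that $\lambda - A = \int_\Omega^\oplus (\lambda - A(s))\, d\mu(s)$: the scalar operator $\lambda I = \int_\Omega^\oplus \lambda I_{H_s}\, d\mu(s)$ is bounded and decomposable, so subtracting it alters neither the domain nor the fibrewise action, whence the fibre of $\lambda - A$ at $s$ is $\lambda - A(s)$ for a.e.\ $s$. I must also check that $\lambda - A$ genuinely qualifies as a direct integral of closed operators, i.e.\ that the implicit resolvent assumption holds for it. If $\nu$ is the point supplied by the hypothesis that $A$ is a direct integral, so that $\nu \in \rho(A(s))$ a.e.\ with $\{R(\nu, A(s))\}$ a bounded measurable field, then $\lambda - \nu \in \rho(\lambda - A(s))$ a.e.\ and $R(\lambda - \nu, \lambda - A(s)) = -R(\nu, A(s))$, which is again a bounded measurable field. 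Measurability of $\{R(\lambda, A(s))\}$ at any other resolvent point then follows from the resolvent identity $R(\lambda, B) = (\lambda - \nu + R(\nu, B)^{-1})^{-1}$ recalled in Section \ref{sec:3}.

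Next I would apply part (ii) of the theorem to the field $\{\lambda - A(s)\}$. Since each $A(s)$ is closed, the existence of an everywhere-defined bounded $(\lambda - A(s))^{-1}$ is precisely the statement $\lambda \in \rho(A(s))$, and essential boundedness of these inverses is precisely $\esssup_{s} \|R(\lambda, A(s))\|_{\B(H_s)} < \infty$. Reading off the theorem, $(\lambda - A)^{-1}$ exists as a bounded operator -- that is, $\lambda \in \rho(A)$ -- exactly when $\lambda \in \rho(A(s))$ for a.e.\ $s$ and the essential supremum of the fibrewise resolvent norms is finite, and in that case $R(\lambda, A) = (\lambda - A)^{-1} = \int_\Omega^\oplus R(\lambda, A(s))\, d\mu(s)$. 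This is exactly (i).

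Finally, (ii) follows by negating the characterisation in (i): $\lambda \in \sigma(A)$ if and only if either the condition ``$\lambda \in \rho(A(s))$ for a.e.\ $s$'' fails -- equivalently, there is a set $G$ of positive measure on which $\lambda \in \sigma(A(s))$ -- or that condition holds while the essential supremum is infinite. Writing the two alternatives as the displayed union has the cosmetic advantage that the essential supremum in the second set is well defined, since it is invoked only when the resolvents exist a.e. I expect no real obstacle here; the only step demanding genuine care is the reduction in the first paragraph, namely confirming that $\lambda - A$ is a bona fide direct integral of closed operators -- so that the theorem is applicable -- and that its fibrewise resolvents remain a measurable field.
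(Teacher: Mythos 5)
Your proposal is correct and follows essentially the same route as the paper: the paper derives Corollary \ref{specprop} immediately from part (ii) of the preceding theorem on inverses, applied to $\lambda - A$, with part (ii) of the corollary obtained by negation, exactly as you do. Your careful verification that $\lambda - A$ is again a legitimate direct integral of closed operators (via $R(\lambda-\nu,\lambda-A(s)) = -R(\nu,A(s))$) simply makes explicit what the paper leaves implicit in the word ``immediately.''
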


This corollary leads to another corollary, covering and generalising the counting measure case for Hilbert spaces in \cite[Corollary 3.7]{LM}, following its proof.

\begin{corollary}
	Suppose that $A = \int_\Omega^\oplus A(s) \,d\mu(s)$ and that there exists $\delta >0$ satisfying the following condition: If $G\subset \Omega$ with $\mu(G) >0$ and $\lambda \in \rho(A(s))$ for a.e.\ $s \in G$, then there exist $G_\lambda \subset G$ with $\mu(G_\lambda) >0$ and $\eta \in \C$ such that $\eta \in \sigma(R(\lambda,A(s)))$  and $$|\eta| \geq \delta\|R(\lambda,A(s))\|_{B(H_s)}, \quad (s\in G_\lambda).$$ Then $\sigma(A)$ is equal to the closure of \begin{align*}\Big\{\lambda \in \ \C : & \text{ there exists } K \subset \Omega \text{ such that } \mu(K)>0 \\ & \hspace{7em} \text{ and } \lambda \in \sigma(A(s)) \text{ for all } s \in K \Big\}.\end{align*}

	% Suppose further that there exists $\delta > 0$ such that whenever $R(\lambda,A(s))$ exists for some $\lambda \in\C$ for a.e.\ $s \in G$ where $G \subset \Omega$ is a subset with positive measure, there exists $G_\lambda \subset G$ with positive measure for which there exists $\eta \in \sigma(R(\lambda,A(s))$ and
	% $$|\eta| \geq \delta\|R(\lambda,A(s))\|_{B(H_s)}$$ for all $s \in G_\lambda$.
\end{corollary}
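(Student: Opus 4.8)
The plan is to read off the spectral decomposition from Corollary~\ref{specprop} and then feed the $\delta$-hypothesis into the spectral mapping theorem for resolvents. Write
$$\Sigma = \big\{\lambda \in \C : \text{there exists } K \subset \Omega \text{ with } \mu(K) > 0 \text{ and } \lambda \in \sigma(A(s)) \text{ for all } s \in K\big\}$$
for the set whose closure is claimed to equal $\sigma(A)$; this is exactly the first of the two sets in Corollary~\ref{specprop}(ii), so that $\sigma(A) = \Sigma \cup T$ with
$$T = \big\{\lambda \in \C : \lambda \in \rho(A(s)) \text{ for a.e.\ } s \in \Omega \text{ and } \esssup_{s\in\Omega}\|R(\lambda,A(s))\|_{\B(H_s)} = \infty\big\}.$$
Since $\Sigma \subseteq \sigma(A)$ and $\sigma(A)$ is closed, the inclusion $\overline{\Sigma} \subseteq \sigma(A)$ is immediate. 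As $\Sigma \subseteq \overline{\Sigma}$ trivially, the whole statement reduces to showing $T \subseteq \overline{\Sigma}$.

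So I would fix $\lambda \in T$ and first record the spectral mapping identity $\sigma(R(\lambda,B)) \setminus \{0\} = \{(\lambda-\mu)^{-1} : \mu \in \sigma(B)\}$, valid for any closed $B$ with $\lambda \in \rho(B)$; in particular, if $\eta \in \sigma(R(\lambda,A(s)))$ with $\eta \neq 0$, then $\lambda - \eta^{-1} \in \sigma(A(s))$. For each $n \in \N$ set $E_n = \{s \in \Omega : \|R(\lambda,A(s))\|_{\B(H_s)} > n\}$; this set is measurable (the map $s \mapsto \|R(\lambda,A(s))\|_{\B(H_s)}$ is a countable supremum of measurable functions, exactly as in the remark in Proposition~\ref{bddprop}, since $\{R(\lambda,A(s))\}$ is a measurable field on a set of full measure) and has $\mu(E_n) > 0$ because $\esssup_s \|R(\lambda,A(s))\|_{\B(H_s)} = \infty$. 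As $\lambda \in \rho(A(s))$ for a.e.\ $s \in E_n$, I would apply the $\delta$-hypothesis with $G = E_n$ to obtain a set $G_n \subseteq E_n$ with $\mu(G_n) > 0$ together with a single $\eta_n \in \C$ satisfying $\eta_n \in \sigma(R(\lambda,A(s)))$ and $|\eta_n| \geq \delta\|R(\lambda,A(s))\|_{\B(H_s)} > \delta n$ for every $s \in G_n$.

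The decisive feature is that $\eta_n$ is the \emph{same} value for all $s \in G_n$. Setting $\nu_n := \lambda - \eta_n^{-1}$, which is well defined since $|\eta_n| > \delta n > 0$, the spectral mapping identity yields $\nu_n \in \sigma(A(s))$ for \emph{all} $s \in G_n$, so that $\nu_n \in \Sigma$. Moreover $|\nu_n - \lambda| = |\eta_n|^{-1} \leq (\delta n)^{-1} \to 0$, whence $\nu_n \to \lambda$ and $\lambda \in \overline{\Sigma}$. This gives $T \subseteq \overline{\Sigma}$ and therefore $\sigma(A) = \overline{\Sigma}$.

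I expect the only real subtlety to be the correct use of the $\delta$-hypothesis, namely that it delivers a common $\eta_n$ across $G_n$ rather than an $s$-dependent one. Without this uniformity one could only produce an $s$-varying point $\lambda - \eta(s)^{-1} \in \sigma(A(s))$, which need not lie in $\Sigma$ for any fixed value; indeed the scalar example $A(s) = s$ on $L^2([0,1])$ has $\Sigma = \emptyset$ yet $\sigma(A) = [0,1]$, and there the $\delta$-condition fails precisely because no single $\eta$ can lie in $\sigma(R(\lambda,A(s)))$ throughout a set of positive measure. The remaining ingredients — measurability of $E_n$ and the resolvent spectral mapping identity — are routine.
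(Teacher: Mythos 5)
Your proposal is correct and follows essentially the same route as the paper's proof: both reduce via Corollary \ref{specprop}(ii) to showing that any $\lambda$ with $\lambda \in \rho(A(s))$ a.e.\ but $\esssup_{s}\|R(\lambda,A(s))\|_{\B(H_s)} = \infty$ lies in the closure of the set in question, and both do this by applying the $\delta$-hypothesis on the positive-measure set where the resolvent norm is large to extract a single $\eta$ and then invoking the spectral mapping theorem for the resolvent to produce a point of $\sigma(A(s))$, common to a set of positive measure, within distance $1/|\eta|$ of $\lambda$. Your sequential formulation with $E_n$ and $\nu_n \to \lambda$ is just the paper's $\varepsilon$-argument in different clothing, with the added (welcome) care about measurability of $E_n$ and the correct sign $\nu_n = \lambda - \eta_n^{-1}$ in the spectral mapping identity.
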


\begin{proof}
	The inclusion `$\supseteq$' follows from Corollary \ref{specprop}(ii). For the opposite inclusion let $\lambda \in \rho(A(s))$ for a.e.\ $s$ and suppose that $$\underset{s\in \Omega}{\esssup}\|R(\lambda,A(s))\|_{B(H_s)} = \infty.$$ We are done if we show that $\lambda$ can be approximated by $z$ such that there exists $K\subset\Omega$ of positive measure with $z \in \sigma(A(s))$ for all $s \in K$.
	
	Let $\varepsilon>0$. By assumption, there exists $G \subset \Omega$ of positive measure such that $\lambda \in \rho(A(s))$ and $\|R(\lambda,A(s))\|_s >\frac{1}{\delta\varepsilon}$ for all $s \in G$. Hence, there exists $G_\lambda\subset G$ of positive measure and $\eta\in \C$ such that $\eta \in \sigma(R(\lambda, A(s))$ for all $s\in G_\lambda$ and $|\eta| > \frac{1}{\varepsilon}.$ By \cite[Proposition B.2]{ABHN}, the spectral mapping theorem for the resolvent, there exists $z\in \C$ for which $z\in \sigma(A(s))$ for all $s \in G_\lambda$ such that $(z-\lambda)^{-1}=\eta$. In particular, $|z-\lambda|< \varepsilon$ and $\mu(G_\lambda) >0.$
\end{proof}

It is worth mentioning that Azoff \cite{Azoff} and Chow \cite{Chow} delve much deeper into the spectral theory of direct integrals. In particular, \cite[Examples 4.2, 4.4]{Azoff} show the importance of essential boundedness in Corollary \ref{specprop} by demonstrating that not much can be said about the spectrum of $A$ given only the spectra of a.e.\ $A(s)$.

Finally, we turn to compactness. Before we prove the following lemma, recall that a measurable subset $A\subset \Omega$ is called an atom (with respect to $\mu$) if $\mu(A)>0$ and for all measurable $B\subset A$, either $\mu(B) = 0$ or $\mu(B) = \mu(A)$.

\begin{lemma}\label{comlem}
	If $\int_{\Omega}^\oplus A(s)\,d\mu(s)$ is compact and $\|A(s)\|_{\B(H_s)} \geq \varepsilon$ for some $\varepsilon >0$ and a.e.\ $s\in \Omega$, then $\Omega$ contains an atom.
\end{lemma}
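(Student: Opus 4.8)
The plan is to prove the contrapositive: assuming that $\mu$ has no atoms (and that $\mu(\Omega) > 0$, since otherwise $\dih = \{0\}$, the hypotheses are vacuous, and there is nothing of substance to prove), I will show that $A := \int_\Omega^\oplus A(s)\,d\mu(s)$ cannot be compact. The idea is to manufacture an orthonormal sequence $\{x_n\}$ in $\dih$ whose images $\{Ax_n\}$ are pairwise orthogonal and uniformly bounded below in norm; such a sequence of images can have no convergent subsequence, whereas compactness of $A$ applied to the bounded sequence $\{x_n\}$ would force one to exist.

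The first step is to produce a single measurable unit section $u$ with $\|A(s)u(s)\|_s > \varepsilon/2$ for a.e.\ $s$. Since a compact operator is bounded, Proposition \ref{bddprop} guarantees that $A(s) \in \B(H_s)$ for a.e.\ $s$, so each $A(s)$ is continuous. Working with the countable family $\{f_i\}$ of measurable sections whose pointwise values $\{f_i(s)\}$ are dense in $H_s$ (constructed in the proof of Proposition \ref{bddprop}), the condition $\|A(s)\|_{\B(H_s)} \geq \varepsilon$ together with density and continuity of $A(s)$ ensures that for a.e.\ $s$ there is an index $i$ with $f_i(s) \neq 0$ and $\|A(s)f_i(s)\|_s > (\varepsilon/2)\|f_i(s)\|_s$. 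Selecting, for each $s$, the least such index and normalising yields the desired measurable unit section $u$; measurability follows because $s \mapsto \|A(s)f_i(s)\|_s$ and $s \mapsto \|f_i(s)\|_s$ are measurable, the former since $A(\cdot)f_i(\cdot) \in \F$ by the defining axioms of a measurable field.

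The second step exploits non-atomicity. Choosing a set of finite positive measure (available by $\sigma$-finiteness) and splitting it repeatedly, I obtain pairwise disjoint measurable sets $E_1, E_2, \ldots$ of positive finite measure. Setting $x_n(\cdot) = \mu(E_n)^{-1/2}\,u(\cdot)\,1_{E_n}(\cdot)$ gives unit vectors with disjoint supports, hence an orthonormal sequence. Because $A$ is decomposable, $(Ax_n)(s) = A(s)x_n(s)$ is again supported in $E_n$, so the vectors $Ax_n$ are pairwise orthogonal; moreover $\|Ax_n\|^2 = \mu(E_n)^{-1}\int_{E_n}\|A(s)u(s)\|_s^2\,d\mu(s) > \varepsilon^2/4$. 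Consequently $\|Ax_n - Ax_m\|^2 = \|Ax_n\|^2 + \|Ax_m\|^2 > \varepsilon^2/2$ for $n \neq m$, so $\{Ax_n\}$ has no convergent subsequence, contradicting the compactness of $A$. This contradiction shows that $\Omega$ must contain an atom.

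I expect the main obstacle to be the measurable selection of the unit section $u$: one must ensure that the fibrewise choice of a near-norm-attaining vector is made so that the resulting section is measurable, which is why I rely on the countable dense family from Proposition \ref{bddprop} together with a least-index selection rather than an abstract pointwise choice. By contrast, the decomposition of $\Omega$ into infinitely many disjoint positive-measure sets is a routine consequence of non-atomicity and $\sigma$-finiteness, and the orthogonality together with the norm estimates are immediate once the disjoint supports and the decomposability of $A$ are in place.
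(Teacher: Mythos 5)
Your proof is correct and takes essentially the same route as the paper's: both exploit non-atomicity to carve $\Omega$ into infinitely many disjoint sets of positive measure, build normalised elements of $\dih$ supported on them whose images under $A$ have norm bounded below by $\varepsilon/2$ (using the countable dense family and measurable-selection device from the proof of Proposition \ref{bddprop}), and conclude that these images are pairwise separated by $\varepsilon/\sqrt{2}$, contradicting compactness. The only cosmetic difference is that you construct a single global measurable unit section $u$ and restrict it to each set, whereas the paper constructs a separate section on a positive-measure subset of each piece.
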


\begin{proof}
	Assume otherwise. Then there exist subsets $\Omega = G_1 \supsetneq G_2 \supsetneq \dots$ such that $\mu(G_{i}) > \mu(G_{i+1}) > 0$ for all $i\in\N$. Writing $K_i = G_i\setminus G_{i+1}$, we get that $G = \bigcup_{i=1}^\infty K_i$ is a disjoint union and $\mu(K_i) = \mu(G_i) - \mu(G_{i+1}) > 0$ for all $i\in\N$. Using the same construction as in the proof of Proposition \ref{bddprop}, for all $i\in\N$, there exists a measurable subset $K_i' \subset K_i$ with $\mu(K_i')>0$ and a (normalised) function $y_i \in \dih$ such that $\|A(s)y_i(s)\|_s > \frac{\varepsilon}{2}$ for all $s\in K_i'$, $\|y_i(s)\|_s =1$ for all $s\in K_i'$, and $y_i(s) =0$ for all $s\notin K_i'$. Let $x_i := \mu(K_i')^{-1/2}y_i(s).$ Then $x_i \in \dih$ and $\|x_i\| = 1$ for all $i\in\N$. But
	\begin{align*}\|Ax_i - & Ax_j \|^2 \\ & = \mu(K_i')^{-1}\int_{\Omega}\|A(s)y_i(s)\|_s^2\,d\mu(s) +\mu(K_j')^{-1}\int_{\Omega}\|A(s)y_j(s)\|_s^2\,d\mu(s)\\ & > \frac{\varepsilon^2}{2}
	\end{align*} for all $i\neq j$ as $K_i' \subset K_i$ and $K_j' \subset K_j$ are disjoint. Hence, $A$ cannot be compact.
\end{proof}

Recall that for a direct sum of operators $T = \bigoplus_{i=1}^\infty T_i$ on a direct sum of Hilbert spaces $\bigoplus_{i=1}^\infty H_i$, $T$ is compact if and only if $T_i$ is compact for all $i\in\N$ and $\|T_i\|_{\B(H_i)} \to0$ (proved via finite-rank approximations). Note also that for Radon measures, if $A\subset\Omega$ is an atom, then $A$ is of the form $\{x\}\cup N$ where $x\in A$ and $\mu(N) = 0$. % In the proof of the following theorem, we will prove\footnote[2]{I'll confess, the proof of this commonly-known fact is adapted from StackExchange (\url{https://math.stackexchange.com/questions/41142/questions-on-atoms-of-a-measure} and \url{https://math.stackexchange.com/questions/20661/the-sum-of-an-uncountable-number-of-positive-numbers}). I have included my adaptation because it is cute and concise.} (and use) the well-known fact that $\sigma$-finite spaces can contain at most countably many atoms modulo sets of measure zero.

\begin{theorem}\label{comthm}
	Let $\mu$ be a Radon measure. Suppose that $A=\int_{\Omega}^\oplus A(s)\,d\mu(s)$. Then $A$ is compact if and only if $\Omega = \Omega_1 \cup \Omega_2$ where $\Omega_1$ and $\Omega_2$ are disjoint, $\Omega_1 = \{s_1,s_2,\dots\}$ is a countable set of points such that $\mu(s_i) > 0$ and $A(s_i)$ is compact and non-zero for all $i\in\N$, $\|A(s_i)\|_{\B(H_s)} \to 0$ as $i\to \infty$, and $A(s) = 0$ for a.e.\ $s\in \Omega_2$.
\end{theorem}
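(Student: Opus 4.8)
\emph{Proposal.} The plan is to treat the two implications separately, with the forward direction (compactness forces the stated structure) carrying essentially all of the difficulty.

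For the reverse implication, suppose $\Omega = \Omega_1 \cup \Omega_2$ as described. Since $A(s) = 0$ for a.e.\ $s \in \Omega_2$, the restriction $A_{\Omega_2}$ vanishes, and as $A = A_{\Omega_1}\oplus A_{\Omega_2}$ under the splitting $H = \int_{\Omega_1}^\oplus H_s\,d\mu(s) \oplus \int_{\Omega_2}^\oplus H_s\,d\mu(s)$, it suffices to show $A_{\Omega_1}$ is compact. Because $\Omega_1 = \{s_1,s_2,\ldots\}$ is a countable set of atoms (each of the form point-plus-nullset, as $\mu$ is Radon), the space $\int_{\Omega_1}^\oplus H_s\,d\mu(s)$ is unitarily equivalent to the orthogonal direct sum $\bigoplus_i H_{s_i}$ via $x \mapsto (\mu(\{s_i\})^{1/2}x(s_i))_i$, and under this unitary $A_{\Omega_1}$ corresponds to $\bigoplus_i A(s_i)$. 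Since the scaling affects neither compactness nor operator norm, the recalled direct-sum criterion --- $\bigoplus_i A(s_i)$ is compact iff each $A(s_i)$ is compact and $\|A(s_i)\| \to 0$ --- yields compactness of $A_{\Omega_1}$ directly from the hypotheses.

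For the forward implication, assume $A$ is compact, hence bounded and everywhere defined, so that by Proposition \ref{bddprop} each $A(s)$ is bounded with $\esssup_{s}\|A(s)\|_{\B(H_s)} < \infty$ and $s \mapsto \|A(s)\|_{\B(H_s)}$ measurable. Decompose $\Omega = \Omega_{\mathrm{at}} \cup \Omega_{\mathrm{na}}$ into its purely atomic and non-atomic parts, where $\sigma$-finiteness guarantees that $\Omega_{\mathrm{at}}$ is a countable union of atoms, and for $n \in \N$ put $E_n = \{s : \|A(s)\|_{\B(H_s)} \geq 1/n\}$, so that $\{s : A(s) \neq 0\} = \bigcup_n E_n$. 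The organising observation is that for every measurable $\Omega'$ of positive measure, $\int_{\Omega'}^\oplus H_s\,d\mu(s)$ is a closed $A$-invariant subspace and, by the identification recorded before the theorem, $A_{\Omega'}$ is genuinely the restriction of $A$ to it; since restrictions of compact operators to closed invariant subspaces are compact, every such $A_{\Omega'}$ is compact.

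The crux, and the step I expect to be the main obstacle, is upgrading Lemma \ref{comlem} --- which only locates a single atom --- to the assertion that the non-atomic part carries no mass wherever $A$ is bounded away from zero. I would argue by contradiction: if $\mu(E_n \cap \Omega_{\mathrm{na}}) > 0$ for some $n$, then $F := E_n \cap \Omega_{\mathrm{na}}$ furnishes a compact operator $A_F$ with $\|A(s)\|_{\B(H_s)} \geq 1/n$ for a.e.\ $s \in F$, so Lemma \ref{comlem} with base space $F$ produces an atom contained in $F \subseteq \Omega_{\mathrm{na}}$, contradicting non-atomicity. Hence $A(s) = 0$ for a.e.\ $s \in \Omega_{\mathrm{na}}$, and therefore $\{s : A(s) \neq 0\}$ is, up to a null set, a countable collection of atoms. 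Using the Radon hypothesis to replace each atom by a point, I set $\Omega_1 = \{s_1,s_2,\ldots\}$ to be these points (each with $\mu(\{s_i\}) > 0$) and $\Omega_2 = \Omega \setminus \Omega_1$, so that $A(s) = 0$ for a.e.\ $s \in \Omega_2$. Each $A(s_i)$ is then compact and non-zero, being (up to the unitary scaling above) the restriction $A_{\{s_i\}}$ of the compact operator $A$ with $\|A(s_i)\|_{\B(H_{s_i})} > 0$; and applying the direct-sum criterion once more to the compact operator $A_{\Omega_1} \cong \bigoplus_i A(s_i)$ forces $\|A(s_i)\|_{\B(H_{s_i})} \to 0$, completing the characterisation.
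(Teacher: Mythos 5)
Your proposal is correct and takes essentially the same approach as the paper: restrictions of the compact operator to the invariant subspaces $\int_{\Omega'}^\oplus H_s\,d\mu(s)$ are compact, Lemma \ref{comlem} (with the Radon hypothesis turning atoms into points) rules out any mass where $A$ is bounded away from zero off the atomic part, and the recalled direct-sum criterion handles both the reverse direction and the tail condition $\|A(s_i)\|_{\B(H_{s_i})}\to 0$. The only difference is organisational: you split $\Omega$ into its atomic and non-atomic parts and derive the contradiction from non-atomicity, whereas the paper works directly with the set $S$ of points of positive measure (shown countable via the finite-measure exhaustion) and contradicts the maximality of $S$ --- with the minor bonus that your bookkeeping makes it explicit that each retained $A(s_i)$ is non-zero.
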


\begin{proof}
	The `if' direction is simply the case of a direct sum of compact operators tailing to zero, rewriting $A = \bigoplus_{i=1}^\infty A(s_i)$ on $\bigoplus_{i=1}^\infty H_{s_i}$. There the direct sum can clearly be approximated by operators of finite rank.

	For the `only if' direction, let $\Omega_1$ be the set of points of positive measure and let $\{M_k\}_{k=1}^\infty$ be the collection of sets in Proposition \ref{sim} of finite measure exhausting $\Omega$. Then
	$$S = \bigcup_{k=1}^\infty\bigcup_{n=1}^\infty \left\{s\in S\cap M_k : \mu\left(\{s\}\right)\geq\frac{\mu(M_k)}{n}\right\}.$$ Now $$\bigg|\left\{s\in S\cap M_k : \mu\left(\{s\}\right)\geq\frac{\mu(M_k)}{n}\right\} \bigg| \leq n, \quad (k,n\in\N),$$ so that $S$ is the countable union of finite sets. Writing $S = \{s_1,s_2,\dots\}$, it is clear that the restriction $\int_{S}^{\oplus}A(s)\,d\mu(s)$ of $A$ to the subspace $\int_{S}^\oplus H_s\,d\mu(s)$ is also compact and that we can write $$\int_{S}^\oplus H_s \,d\mu(s) = \bigoplus_{i=1}^\infty H_{s_i} \text{ and } \int_{S}^\oplus A(s)\,d\mu(s) = \bigoplus_{i=1}^\infty A(s_i).$$ It follows from the direct sum case that $A(s_i)$ is compact for all $i\in\N$ and $\|A(s_i)\|_{\B(H_{s_i})} \to 0$ as $i\to \infty$.
	%The only if direction is slightly less simple, though most of the work has already been done in Lemma \ref{comlem}. Let \begin{align*}\mathcal{W} := \bigg\{\Omega' \subset\Omega : \Omega' = \{s_1,s_2,\dots\} & \text{ is at most countable, } \|A(s_i)\|_{\B(H_{s_i})}\to 0 \text{ as } i\to\infty, \\ & \mu\left(\{s_i\}\right) >0 \text{ and } A(s_i) \text{ is non-zero, compact for all } i \in \N\bigg\}\end{align*} and let inclusion be the partial order on $\mathcal{W}$. Consider a chain
	%	$$\Omega_1' \subset \Omega_2' \subset \Omega_3' \subset \dots$$ of elements of $\mathcal{W}$. Let $\Omega' := \bigcup_{j=1}^\infty \Omega_j'$. Then $\Omega'$ is measurable, countable, and $\mu\left(\{s\}\right) > 0$ and $A(s)$ is non-zero and compact for all $s\in\Omega'$. Writing $\Omega' = \{s_1,s_2,\dots\}$, it is clear that the restriction $\int_{\Omega'}^{\oplus}A(s)\,d\mu(s)$ of $A$ to the subspace $\int_{\Omega'}^\oplus H_s\,d\mu(s)$ is also compact and that we can write $$\int_{\Omega'}^\oplus H_s \,d\mu(s) = \bigoplus_{i=1}^\infty H_{s_i} \text{ and } \int_{\Omega'}^\oplus A(s)\,d\mu(s) = \bigoplus_{i=1}^\infty A(s_i).$$ It follows from the direct sum case that $\|A(s_i)\|_{\B(H_{s_i})} \to 0$ as $i\to \infty$ and hence, $\Omega' \in \mathcal{W}$. Thus, every chain in $\mathcal{W}$ has an upper bound so that by Zorn's lemma, $\mathcal{W}$ has a maximal element $\Omega_M$.
	
	If $A(s) =0$ for a.e.\ $s\in \Omega\setminus S$, we are done. If not, there exists $G \subset \Omega\setminus S$ such that $\mu(G) >0$ and $\|A(s)\|_{\B(H_s)} >0$ for all $s\in G$. Without loss of generality, we can assume by taking a smaller subset, that $\|A(s)\|_{\B(H_s)}\geq \varepsilon$ for some $\varepsilon>0$ and a.e.\ $s\in G$. Applying Lemma \ref{comlem} to the subspace $\int_G^\oplus H_s \,d\mu(s)$, we can add another point of positive measure $s_0 \in \Omega\setminus S$ to $S$. This contradicts that $S$ is the set of all points of positive measure and we are done.
\end{proof}

Thus, Theorem \ref{comthm} tells us that the only compact direct integrals of operators are, in fact, direct sums.

\section{Direct Integrals of $C_0$-Semigroups}\label{sec:4}

In this section, we begin by proving that we can take direct integrals of $C_0$-semigroups that have a uniform exponential growth bound to get a $C_0$-semigroup on the direct integral space. Furthermore, the generator of the direct integral $C_0$-semigroup is the direct integral of the generators.

\begin{theorem}\label{sgthm}
	Let $\big\{T^{(\cdot)}(s): \R_+ \to \B(H_s) : s \in \Omega\big\}$ be a collection of $C_0$-semigroups. If for each $t\geq0$, the direct integral of $\{T^t(s) : s\in \Omega\}$ exists as a bounded operator, then $T: \R_+ \to \B\left(\int_\Omega^\oplus H_s \, d\mu(s)\right)$ defined and denoted by
	$$T(t) := \int_\Omega^\oplus T^t(s)\,d\mu(s)$$ is a $C_0$-semigroup if and only if there exist $M \geq 1, \omega \in \R$ such that $$\|T^t(s)\|_{B(H_s)} \leq Me^{\omega t}$$ for a.e.\ $s \in \Omega$ and all $t >0$. In this case, $\|T(t)\| \leq Me^{\omega t}$ for all $t\geq 0$ and its generator $A$ is given by
	$$A = \int_\Omega^\oplus A(s)\,d\mu(s)$$ where $A(s)$ is the generator of $T^t(s)$ for a.e.\ $s \in \Omega$.
\end{theorem}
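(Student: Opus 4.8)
The plan is to prove the two implications separately and then identify the generator by comparing resolvents. For the \emph{if} direction, assume the uniform bound $\|T^t(s)\|_{\B(H_s)} \leq Me^{\omega t}$ a.e. The algebraic semigroup law is inherited fibrewise: since composition of bounded decomposable operators is computed fibre-by-fibre, $T(t)T(r) = \int_\Omega^\oplus T^t(s)T^r(s)\,d\mu(s) = \int_\Omega^\oplus T^{t+r}(s)\,d\mu(s) = T(t+r)$, and $T(0) = \int_\Omega^\oplus I_{H_s}\,d\mu(s) = I$. Proposition \ref{bddprop} immediately gives $\|T(t)\| \leq Me^{\omega t}$. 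The only nontrivial point is strong continuity, which I would establish at $t=0$ by dominated convergence: for fixed $x \in \dih$,
$$\|T(t)x - x\|^2 = \int_\Omega \|T^t(s)x(s) - x(s)\|_s^2\,d\mu(s),$$
the integrand being measurable since both $T(t)x$ and $x$ are measurable sections. For $t \in (0,1]$ it is dominated by $(Me^{|\omega|}+1)^2\|x(s)\|_s^2 \in L^1(\mu)$, and it tends to $0$ pointwise a.e. as $t \to 0^+$ because each fibre semigroup is strongly continuous. Applying dominated convergence along an arbitrary sequence $t_n \to 0^+$ yields $T(t)x \to x$, and strong continuity everywhere then follows from the semigroup law and the uniform bound on compact intervals.

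For the \emph{only if} direction, if $T$ is a $C_0$-semigroup then $\|T(t)\| \leq Me^{\omega t}$ for some $M \geq 1$, $\omega \in \R$ by the standard growth bound. For each fixed $t$, Proposition \ref{bddprop} gives $\|T^t(s)\| \leq Me^{\omega t}$ off a null set $N_t$. The subtlety is that $N_t$ depends on $t$, so I would first treat rational $t$, set $N = \bigcup_{q \in \Q_{>0}} N_q$ (still null), and for $s \notin N$ pass to the limit: given $t > 0$ and $x \in H_s$, choosing rationals $q_n \to t$ and using $T^{q_n}(s)x \to T^t(s)x$ gives $\|T^t(s)x\|_s \leq Me^{\omega t}\|x\|_s$, whence $\|T^t(s)\| \leq Me^{\omega t}$ for all $t > 0$ and a.e. $s$.

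It then remains to identify the generator $A$ of $T$ with $\tilde A := \int_\Omega^\oplus A(s)\,d\mu(s)$. Fix $\lambda$ with $\real\lambda > \omega$. Then $\lambda \in \rho(A)$ with $R(\lambda,A)x = \int_0^\infty e^{-\lambda t}T(t)x\,dt$, while fibrewise $\lambda \in \rho(A(s))$ with $R(\lambda,A(s)) = \int_0^\infty e^{-\lambda t}T^t(s)\,dt$ and $\|R(\lambda,A(s))\| \leq M/(\real\lambda - \omega)$ uniformly; by Corollary \ref{specprop}(i) this places $\lambda \in \rho(\tilde A)$ with $R(\lambda,\tilde A) = \int_\Omega^\oplus R(\lambda,A(s))\,d\mu(s)$. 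Testing the global Laplace representation against an arbitrary $y \in \dih$ and interchanging the two integrals,
$$\langle R(\lambda,A)x, y\rangle = \int_0^\infty e^{-\lambda t}\int_\Omega \langle T^t(s)x(s), y(s)\rangle_s\,d\mu(s)\,dt = \int_\Omega \langle R(\lambda,A(s))x(s), y(s)\rangle_s\,d\mu(s),$$
so that $R(\lambda,A)x = R(\lambda,\tilde A)x$ for every $x \in \dih$. Since $A = \lambda I - R(\lambda,A)^{-1}$ and $\tilde A = \lambda I - R(\lambda,\tilde A)^{-1}$, two closed operators sharing a resolvent at one point coincide, giving $A = \tilde A$.

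The main obstacle is the resolvent interchange in the last step. The Fubini--Tonelli argument requires joint $(t,s)$-measurability of $(t,s) \mapsto \langle T^t(s)x(s), y(s)\rangle_s$, which I would obtain from its being a Carath\'eodory function -- measurable in $s$ for each $t$ since $\{T^t(s)\}$ is a measurable field, and continuous in $t$ for each $s$ by strong continuity of the fibres -- together with absolute integrability, which follows from $\int_0^\infty e^{(\omega - \real\lambda)t}\,dt < \infty$ and $\int_\Omega \|x(s)\|_s\|y(s)\|_s\,d\mu(s) \leq \|x\|\,\|y\|$. A secondary technical point, already flagged, is the $t$-dependence of the exceptional null sets in the \emph{only if} direction, which the countable density of $\Q$ resolves.
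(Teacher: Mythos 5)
Your proposal is correct and follows essentially the same route as the paper's proof: dominated convergence for strong continuity at $0$, Proposition \ref{bddprop} for both directions of the exponential bound, and identification of the generator by comparing the Laplace-transform representations of $R(\lambda,A)$ fibrewise and globally via a Fubini interchange tested against arbitrary $x,y \in \dih$. If anything, you are more explicit than the paper on two points it glosses over, namely the $t$-dependence of the exceptional null sets in the \emph{only if} direction (resolved by rational $t$ and strong continuity of the fibre semigroups) and the joint $(t,s)$-measurability needed for Fubini (resolved by the Carath\'eodory argument), the latter being exactly what also guarantees that $\int_\Omega^\oplus A(s)\,d\mu(s)$ exists as a direct integral of a measurable field of closed operators.
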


\begin{proof}
	First, we prove the `if' direction. $T : \R_+ \to B(\dih)$ clearly satisfies the semigroup property. We now check the strong continuity at $0$. Since $T^{(\cdot)}(s)$ is a $C_0$-semigroup for each $s \in \Omega$, for any $x \in \dih$, $T^t(s)x(s) \to x(s)$ in $H_s$ as $t\to 0$. Hence we have pointwise convergence of $\|T^t(s)x(s)-x(s)\|_s \to 0$. However,
	$$\|T^t(s)x(s)-x(s)\|_s^2 \leq \left(\|T^t(s)x(s)\|_s + \|x(s)\|_s\right)^2 \leq (Me^{\omega t}+1)^2\|x(s)\|_s^2$$ for all $t \geq0.$ Since $x \in \dih$, by Lebesgue's dominated convergence theorem,
	$$\int_\Omega \|T^t(s)x(s)-x(s)\|_s^2\, d\mu(s) \to 0.$$
	
	By Proposition \ref{bddprop}, $\|T(t)\| \leq Me^{\omega t}$ for all $t\geq0.$ Since any $C_0$-semigroup has an exponential bound, Proposition \ref{bddprop} also proves the `only if' direction.
	
	Finally, let $B$ be the generator of the direct integral $C_0$-semigroup $T$. Our goal is to prove that $B = A$, but we must first check that $A$ exists as a direct integral. Let $\lambda = \omega + 1$.  Then by the Laplace transform representation for resolvents of generators, $\lambda \in \rho(A(s))$ for a.e.\ $s$ and
	\begin{align*}
	\langle R(\lambda,A(s))x(s),y(s)\rangle_s & = \left\langle \int_0^\infty e^{-\lambda t} T^t(s)x(s)\,dt,y(s)\right\rangle_s \\ & = \int_0^\infty e^{-\lambda t} \langle T^t(s)x(s),y(s)\rangle_s \,dt
	\end{align*} for all $x,y \in \F$. Since the final integrand is measurable by assumption, Fubini's theorem tell us that $\{R(\lambda,A(s)) : s\in\Omega\}$ forms a measurable field of bounded operators. Hence $\{A(s) : s\in\Omega\}$ is a measurable field of closed operators.

	We now show that $B=A$. Again, let $\lambda = \omega +1$. Then $\lambda \in \rho(B)$, $\lambda \in \rho(A(s))$ and for some fixed $C>0$, $\|R(\lambda,B)\|, \|R(\lambda,A(s))\| \leq C$ for a.e.\ $s\in\Omega$. By Proposition \ref{specprop}, $\lambda \in \rho(A)$. Once again using the Laplace transform representation for resolvents of generators and Corollary \ref{specprop}(i), we have
	
	\begin{align}R(\lambda,B)x & = \int_0^\infty e^{-\lambda t}T(t)x \,dt \label{bochdih} = \int_0^\infty \bigg(\int_{\Omega}^\oplus e^{-\lambda t}T^t(s)\,d\mu(s)\bigg)x\, dt, \\
	R(\lambda,A)x & = \left(\int_\Omega^\oplus R(\lambda,A(s))\,d\mu(s)\right)x,\nonumber\end{align} and for a.e.\ $s\in\Omega$, \begin{equation}\label{bochHs} R(\lambda,A(s))x(s) = \int_0^\infty e^{-\lambda t}T^t(s)x(s) \,dt.\end{equation} The outer integral in Equation (\ref{bochdih}) is a Bochner integral in $\dih$ and the integral in Equation (\ref{bochHs}) is a Bochner integral in $H_s$. Let $x,y \in \dih$ be arbitrary. Then
	%Hence it remains to show that we can switch the `order of integration' (though the direct integral is not a true integral). 
	\begin{align}
	\left\langle \int_0^\infty \bigg(\int_\Omega^\oplus e^{-\lambda t}T^t(s)\,d\mu(s)\bigg)x\,dt,y\right\rangle \nonumber \\%& = \int_0^\infty \left\langle \int_\Omega^\oplus e^{-\lambda t}T^t(s)x(s)d\mu(s),y\right\rangle dt \nonumber \\ 
	& \hspace{-5em} = \int_0^\infty \int_\Omega \left\langle e^{-\lambda t}T^t(s)x(s),y(s) \right\rangle_s \,d\mu(s)dt \nonumber \\ & \hspace{-5em} = \int_\Omega \int_0^\infty \left\langle e^{-\lambda t}T^t(s)x(s),y(s) \right\rangle_s \,dtd\mu(s) \label{fubinn} \\ & \hspace{-5em} = \int_\Omega \left\langle \int_0^\infty e^{-\lambda t}T^t(s)x(s)\,dt,y(s)\right\rangle_s\,d\mu(s) \nonumber \\ & \hspace{-5em} = \int_\Omega \big\langle R(\lambda,A(s))x(s),y(s)\big\rangle_s\,d\mu(s) \nonumber \\ & \hspace{-5em} = \left\langle \left(\int_{\Omega}^{\oplus}R(\lambda,A(s))\,d\mu(s)\right)x,y\right\rangle, \nonumber
	\end{align} where Equation (\ref{fubinn}) holds due to Fubini's theorem. Hence $\big\langle R(\lambda,B)x,y\big\rangle = \big\langle R(\lambda,A)x,y\big\rangle$ for arbitrary $x$ and $y$, so that $R(\lambda,B) = R(\lambda,A)$ and in particular, $B=A$.
\end{proof}

A natural question to ask is whether the sufficient conditions for a direct integral of operators to generate a $C_0$-semigroup are also necessary conditions (see \cite[Theorem 4.4]{LM}). A positive answer for this question is given in the following theorem.

%A natural question to ask now is whether there is an analogous result to \cite[Theorem 4.4]{LM}, where it is shown that when $\nu$ is the counting measure, if $\int_\Omega^\oplus A(n)d\nu(n)$ generates a $C_0$-semigroup, then $A(n)$ generates a $C_0$-semigroup for every $n \in \N$. In other words, the sufficient conditions for a `diagonal operator' to generate a $C_0$-semigroup are also necessary conditions. Whether the same holds in general is an open question.

\begin{theorem}\label{sgthmconv}
	If $A = \int_\Omega^\oplus A(s)\,d\mu(s)$ generates a $C_0$-semigroup $T(\cdot)$ on $\dih$, then $A(s)$ generates a $C_0$-semigroup denoted $T^{(\cdot)}(s)$ on $H_s$ for a.e.\ $s \in \Omega$ which has the same exponential bounds as the one generated by $A$. Furthermore,
	$$T(t) = \int_\Omega^\oplus T^t(s)\,d\mu(s).$$
\end{theorem}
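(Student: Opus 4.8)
The plan is to push the Feller--Miyadera--Phillips (Hille--Yosida) generation conditions for $A$ down to almost every fibre $A(s)$, generate the fibrewise semigroups there, and then reassemble them using Theorem \ref{sgthm}. Since $A$ generates a $C_0$-semigroup, there are $M \geq 1$ and $\omega \in \R$ with $(\omega,\infty) \subset \rho(A)$ and $\|R(\lambda,A)^n\| \leq M(\lambda-\omega)^{-n}$ for all $\lambda > \omega$ and $n \in \N$. By Corollary \ref{specprop}(i) we have $R(\lambda,A) = \int_\Omega^\oplus R(\lambda,A(s))\,d\mu(s)$, and since the direct integral of bounded operators is multiplicative, $R(\lambda,A)^n = \int_\Omega^\oplus R(\lambda,A(s))^n\,d\mu(s)$. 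Proposition \ref{bddprop} identifies the operator norm with the essential supremum of the fibrewise norms, so for each fixed $\lambda > \omega$ and $n \in \N$ we obtain $\|R(\lambda,A(s))^n\| \leq M(\lambda-\omega)^{-n}$ for a.e.\ $s$.

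The main obstacle is that the exceptional null set a priori depends on $\lambda$ and $n$, whereas the generation theorem demands the estimate simultaneously for all $\lambda > \omega$ and all $n$ outside a single null set. I would resolve this by restricting attention to a countable dense set: let $N$ be the (still null) union over all rational $\lambda \in (\omega,\infty)$ and all $n \in \N$ of the corresponding exceptional sets. For $s \notin N$ the bound $\|R(\lambda,A(s))^n\| \leq M(\lambda-\omega)^{-n}$ holds for every rational $\lambda > \omega$ and every $n$. To pass to arbitrary real $\lambda > \omega$, I would first apply Proposition \ref{denseresolvent} on each compact subinterval $[a,b] \subset (\omega,\infty)$ (where the $n=1$ bound is dominated by the constant $M(a-\omega)^{-1}$ on a dense subset) to conclude $(\omega,\infty) \subset \rho(A(s))$, and then use the norm-continuity (indeed analyticity) of $\lambda \mapsto R(\lambda,A(s))^n$ on the resolvent set to extend each power estimate from rational to real $\lambda$ by taking limits. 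Together with the facts that $A(s)$ is closed and, for a.e.\ $s$, densely defined (the latter already noted in the excerpt, since $A$ is densely defined), the Feller--Miyadera--Phillips theorem shows that for $s \notin N$ the operator $A(s)$ generates a $C_0$-semigroup $T^{(\cdot)}(s)$ with $\|T^t(s)\|_{\B(H_s)} \leq M e^{\omega t}$, that is, with the same exponential bounds.

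It remains to reassemble these fibre semigroups. I would first check that $\{T^t(s) : s \in \Omega\}$ is a measurable field for each fixed $t$: the exponential formula $T^t(s)x(s) = \lim_{n\to\infty} (n/t)^n R(n/t,A(s))^n x(s)$ exhibits $s \mapsto \langle T^t(s)x(s),y(s)\rangle_s$ as a pointwise limit of measurable functions, hence measurable. The uniform bound $\|T^t(s)\| \leq M e^{\omega t}$ together with Proposition \ref{bddprop} then guarantees that $\int_\Omega^\oplus T^t(s)\,d\mu(s)$ exists as a bounded operator for each $t$. Now Theorem \ref{sgthm} applies directly: the family $\{T^{(\cdot)}(s)\}$ is uniformly exponentially bounded, so $\widetilde{T}(t) := \int_\Omega^\oplus T^t(s)\,d\mu(s)$ is a $C_0$-semigroup whose generator is $\int_\Omega^\oplus A(s)\,d\mu(s) = A$. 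Since a generator determines its semigroup uniquely, $\widetilde{T} = T$, which is exactly the asserted identity $T(t) = \int_\Omega^\oplus T^t(s)\,d\mu(s)$.
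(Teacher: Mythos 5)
Your proposal is correct, and its first half coincides with the paper's own argument: both push the Feller--Miyadera--Phillips estimates down to the fibres by taking the countable union of exceptional null sets over rational $\lambda>\omega$ and $n\in\N$, then upgrade to all real $\lambda>\omega$ via Proposition \ref{denseresolvent} applied on bounded subsets together with continuity of the resolvent, and both finish by feeding the resulting a.e.\ uniformly exponentially bounded family into Theorem \ref{sgthm} (the paper concludes $T(t)=\int_\Omega^\oplus T^t(s)\,d\mu(s)$ by the same uniqueness-of-generator reasoning you use). Where you genuinely diverge is the measurability of the field $\{T^t(s):s\in\Omega\}$, which is the step the paper spends most of its effort on. The paper fixes $x,y\in\F$, forms the Laplace transform $F(\lambda,s)$ of $g(t,s)=\langle T^t(s)x(s)-x(s),y(s)\rangle_s$, and after several applications of Fubini's theorem invokes the complex inversion (Tauberian) theorem \cite[Theorem 4.2.21(b)]{ABHN} to exhibit $g(t,s)$ for a.e.\ $s$ as a limit of measurable functions. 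You instead use the Post--Widder exponential formula, $T^t(s)x(s)=\lim_{n\to\infty}\left[\tfrac{n}{t}R\left(\tfrac{n}{t},A(s)\right)\right]^n x(s)$, so that $s\mapsto\langle T^t(s)x(s),y(s)\rangle_s$ is an a.e.\ pointwise limit of measurable functions; the functions in question are measurable because powers of a bounded measurable field of resolvents again form a measurable field (by property (2) of measurable sections, $R(\lambda,A(\cdot))x(\cdot)\in\F$, so products of measurable fields stay measurable). Your route is shorter and more elementary, replacing the vector-valued Laplace inversion machinery with a formula the paper itself cites elsewhere (\cite[Chapter III Corollary 5.5]{EN}). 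One caveat worth flagging: the remark following Corollary \ref{sgposmeascor} warns that a Post--Widder argument ``could not be directly used to prove Theorem \ref{sgthmconv}'', but that warning concerns applying the formula on the ambient space $\dih$ to conjure fibrewise semigroups into existence; your use is immune to it, since you first secure generation on each fibre via Feller--Miyadera--Phillips exactly as the paper does, and only then invoke Post--Widder fibrewise, where a genuine semigroup $T^{(\cdot)}(s)$ already exists.
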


%\begin{opq}
%    If $A = \int_\Omega^\oplus A(s)d\mu(s)$ generates a $C_0$-semigroup on \\ $\dih$, does $A(s)$ generate a $C_0$-semigroup on $H_s$ for a.e.\ $s \in \Omega$?
%\end{opq}

%To prove this theorem, we first need the following version of the general version of the Hille-Yosida generation theorem by Feller, Miyadera, and Phillips.

%\begin{lemma}\label{hylem}
%   Let $(B,D(B))$ be a closed, densely defined linear operator on a Banach space $Y$ and let $\omega \in \R, M \geq 1$ be constants. If $\{k \in \N : k > \omega\} \subset \rho(B)$ and for every integer $k > \omega$,
% 	 $$\|[(k-\omega)R(k,B)]^n\| \leq M$$ for all $n \in \N$, then $(B,D(B))$ generates a $C_0$-semigroup $S$ satisfying
%   $$\|S(t)\| \leq Me^{\omega t}$$ for all $t > 0$.
%\end{lemma}

%\begin{proof}
%    See proof of \cite[Chapter II, Theorem 3.5]{EN} and its adaptation to \cite[Chapter II, Theorem 3.8]{EN} to see that it is enough that the integers $k> \omega$ (or any increasing sequence of reals) are in the resolvent and satisfy the resolvent bounds in order to utilise the Yosida approximations required in constructing $S$.
%\end{proof}

\begin{proof} %[Proof of Theorem \ref{sgthmconv}]
	If $A$ generates a $C_0$-semigroup on $\dih$, then there exist $M\geq 1,\omega \in \R$ such that $\|T(t)\| \leq Me^{\omega t}$ for all $t\geq0$ and for every $\lambda > \omega$, one has $\lambda \in \rho(A)$ and $$\|[(\lambda-\omega)R(\lambda,A)]^n\| \leq M$$ for all $n \in \N$. By Propositions \ref{specprop}(2) and \ref{bddprop}, for each $\lambda > \omega$ and $n \in \N$, there exists a measurable $U_{\lambda,n} \subset \Omega$ with $\mu(\Omega \setminus U_{\lambda,n})=0$ such that $\lambda \in \rho(A(s))$ and $$\|[(\lambda-\omega)R(\lambda,A(s))]^n\|_s \leq M$$ for all $s \in \Omega\setminus U_{\lambda,n}$. Let $G = \bigcup_{\lambda \in \Q_{\omega+}, n \in \N} U_{\lambda,n}$ where $\Q_{\omega+} = \{\mu \in \Q : \mu > \omega\}$. Then $\mu(\Omega\setminus G)=0$ and for all $s \in G$, we have $\Q_{\omega+} \subset \rho(A(s))$ and \begin{equation}\label{feller}\|[(\lambda-\omega)R(\lambda,A(s))]^n\|_s \leq M, \quad (\lambda \in \Q_{\omega+}, n \in \N).\end{equation}
	
	Clearly $\Q_{\omega+}$ is dense in $\{\mu \in \R : \mu > \omega\}$ so by considering bounded subsets of $\{\mu \in \R : \mu > \omega\}$ and applying Proposition \ref{denseresolvent} to the relation (\ref{feller}), we get that for all $\lambda \in \R$ with $\lambda > \omega$, we have $\lambda \in \rho(A(s))$ and by continuity, (\ref{feller}) holds for all $s\in G$. It follows that for all $s \in G$ and hence for a.e.\ $s \in \Omega$, $A(s)$ generates a $C_0$-semigroup $T^t(s)$ such that $$\|T^t(s)\|_{\B(H_s)} \leq Me^{\omega t}$$ for all $t\ge0$. % We must next show that $s \mapsto T^tx(s)$ is an element of $\dih$ for every $x \in \dih$. The first step is to prove measurability.
	
	Let $\real \lambda > \omega$ so that $\lambda \in \rho(A(s))$ for a.e.\ $s \in \Omega$. Since $\{A(s) : s \in \Omega\}$ is a measurable field of closed operators, $\{R(\lambda,A(s)) : s \in \Omega\}$ forms a measurable field of bounded operators. Hence for any $x,y \in \F$, the Laplace transform representation for resolvents of generators gives us that
	\begin{align*}s\mapsto \big\langle R(\lambda,A(s))x(s),y(s)\big\rangle_s & = \left\langle \int_0^\infty e^{-\lambda t} T^t(s)x(s) \,dt,y(s)\right\rangle_s \\ &= \int_0^\infty e^{-\lambda t}\left\langle T^t(s)x(s),y(s) \right\rangle_s \,dt\end{align*} is measurable. Let
	$$F(\lambda,s) := \int_0^\infty e^{-\lambda t}\left\langle T^t(s)x(s) - x(s),y(s) \right\rangle_s \,dt,$$ which is measurable by Fubini's theorem. Notice that for fixed $s$, $F(\lambda,s)$ is the Laplace transform with respect to $t$ of $g(t,s)= \left\langle T^t(s)x(s) - x(s),y(s) \right\rangle_s$ which satisfies $g(0,s) = 0$, is continuous in $t$ by strong continuity of $T^t(s)$, and
	\begin{align*}|g(t,s)| & = \left|\left\langle T^t(s)x(s) - x(s),y(s) \right\rangle_s\right| \\ & \leq (ce^{\omega t}+1)\|x(s)\|_s \|y(s)\|_s \leq (c+1)\|x(s)\|_s \|y(s)\|_s e^{\omega' t}\end{align*}
	where $\omega' = \max{\{\omega,0\}}$ so that $g$ is exponentially bounded with respect to $t$. Applying Fubini's theorem two more times, we get that
	$$s \mapsto \frac{1}{u}\int_0^u \frac{1}{2\pi}\int_{\omega' +1-ir}^{\omega' + 1 + ir} e^{\lambda t}F(\lambda,s)\,d\lambda dr$$ is also measurable for each $t\geq0$. By \cite[Theorem 4.2.21(b)]{ABHN}, a Tauberian theorem,
	$$g(t,s)=\lim_{u\to \infty}\frac{1}{u}\int_0^u \frac{1}{2\pi}\int_{\omega' +1-ir}^{\omega' + 1 + ir} e^{\lambda t}F(\lambda,s)\,d\lambda dr$$ for a.e.\ $s\in \Omega$. Hence $s \mapsto \left\langle T^t(s)x(s) - x(s),y(s) \right\rangle_s$ is the pointwise a.e.\ limit of measurable functions for each $t\geq0$. This together with the uniform exponential bounds $\|T^t(s)\|_{\B(H_s)} \leq Me^{\omega t}$ implies that $\{T^t(s) : s \in \Omega\}$ form a measurable field of bounded operators for each $t \geq0$.
	
	Theorem \ref{sgthm} then implies that $A$ generates $\int_\Omega^\oplus T^t(s)\,d\mu(s)$, hence $$T(t) = \int_\Omega^\oplus T^t(s)\,d\mu(s).$$ \end{proof}

As mentioned in the introduction, Theorem \ref{sgthmconv} says that decomposable generators generate $C_0$-semigroups that are decomposable into individual $C_0$-semigroups.

\begin{remark}\label{decomp}
	If $A$ is a decomposable operator that generates a $C_0$-semigroup $T(\cdot)$, the following argument is an alternative way via the characterisation by Nussbaum to see that for each $t\ge0$, $T(t)$ is a decomposable operator. Since $A$ is decomposable, Corollary \ref{specprop}(i) implies that $R(\lambda,A)$ is decomposable for all $\lambda \in \rho(A)$. In particular, $R(\lambda,A)$ commutes with all bounded diagonalisable operators \cite[Corollary 4]{Nuss} and hence by \cite[Proposition 3.1.5]{ABHN}, $T(t)$ commutes with all bounded diagonalisable operators for any $t\ge0$. The characteristaion \cite[Corollary 4]{Nuss} again implies that $T(t)$ is decomposable for all $t\ge0$. However, unlike Theorem \ref{sgthmconv} this argument does not guarantee that $T(\cdot)$ decomposes into a family of $C_0$-semigroups, since all we would know from this is that $T(t)$ is decomposable for each $t\ge0$. Difficulties arise when trying to show the existence of a semigroup on $H_s$ for any particular $s$.
\end{remark}

We also have the following corollary which generalises \cite[Theorem 4.4]{LM} in the Hilbert space case in a weaker way than Theorem \ref{sgthmconv}.

%This answer is somewhat surprising, as the content of Theorem \ref{sgthm} is reminiscent of `pointwise a.e.\ versus $L^2$' convergence. In both Theorem \ref{sgthm} and in Lebesgue's dominated convergence theorem which is used in the proof of Theorem \ref{sgthm}, a pointwise a.e.\ property together with a uniform dominating condition imply an analogous integral property. However, $L^2$ convergence does not imply pointwise a.e.\ convergence.

\begin{corollary}\label{sgposmeascor}
	Suppose that $A = \int_\Omega^\oplus A(s)\,d\mu(s)$ generates a $C_0$-semigroup on $\dih$. Then for every measurable subset $\Omega'\subset \Omega$ with positive measure, $\int_{\Omega'}^\oplus A(s)\,d\mu(s)$ generates a $C_0$-semigroup on the closed subspace $\int_{\Omega'}^\oplus H_s \,d\mu(s)$.
\end{corollary}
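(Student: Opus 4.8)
The plan is to derive the statement directly from the two central theorems of this section, so that it becomes essentially a corollary of the stronger Theorem \ref{sgthmconv}. Since $A = \int_\Omega^\oplus A(s)\,d\mu(s)$ generates a $C_0$-semigroup $T(\cdot)$, Theorem \ref{sgthmconv} already carries out the hard analytic work: it produces fiber semigroups $T^t(s)$ generated by $A(s)$ for a.e.\ $s \in \Omega$, together with a \emph{uniform} exponential bound $\|T^t(s)\|_{\B(H_s)} \leq Me^{\omega t}$ valid for a.e.\ $s$ and all $t \geq 0$, and in the course of its proof it establishes that $\{T^t(s) : s \in \Omega\}$ is a bounded measurable field of operators for each $t$. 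The idea is then to restrict all of this data to $\Omega'$ and feed it into the `if' direction of Theorem \ref{sgthm}.

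Concretely, I would first invoke Theorem \ref{sgthmconv} to obtain the fiber semigroups and the constants $M \geq 1$, $\omega \in \R$. Restricting to the measurable set $\Omega'$ (which has positive measure, so the restricted direct integral $\int_{\Omega'}^\oplus A(s)\,d\mu(s)$ is well defined on the closed subspace $\int_{\Omega'}^\oplus H_s\,d\mu(s)$, as set up in Sections \ref{sec:2}--\ref{sec:3}), the bound $\|T^t(s)\|_{\B(H_s)} \leq Me^{\omega t}$ continues to hold for a.e.\ $s \in \Omega'$, and the measurable field structure of $\{T^t(s)\}$ restricts to a bounded measurable field over $\Omega'$, so by Proposition \ref{bddprop} the operator $\int_{\Omega'}^\oplus T^t(s)\,d\mu(s)$ exists and is bounded for every $t \geq 0$. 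All the hypotheses of Theorem \ref{sgthm} are thereby met with $\Omega'$ in place of $\Omega$; applying it yields that $\int_{\Omega'}^\oplus T^t(s)\,d\mu(s)$ is a $C_0$-semigroup on $\int_{\Omega'}^\oplus H_s\,d\mu(s)$ whose generator is $\int_{\Omega'}^\oplus A(s)\,d\mu(s)$, which is precisely the restricted operator $A_{\Omega'}$.

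The only genuinely delicate point is the restriction bookkeeping: one must confirm that the measurable field of Hilbert spaces and the measurable field of operators $\{T^t(s)\}$ genuinely restrict to $\Omega'$. This is routine — the fundamental sequence for $\Omega$ restricts to one for $\Omega'$ (cf.\ Proposition \ref{sim}), and measurability of $s \mapsto \langle T^t(s)x(s),y(s)\rangle_s$ over $\Omega$ immediately gives measurability over the subset $\Omega'$. A minor wrinkle is that Theorem \ref{sgthmconv} furnishes the fiber semigroups and bounds only for a.e.\ $s$; since direct integrals are insensitive to null sets, one adjusts $T^t(s)$ on a $\mu$-null subset of $\Omega'$ (as permitted by the definition of a measurable field of closed operators) so that the `collection of $C_0$-semigroups' hypothesis of Theorem \ref{sgthm} is formally satisfied. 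No new analytic estimate is required beyond those already assembled in the proofs of Theorems \ref{sgthm} and \ref{sgthmconv}.
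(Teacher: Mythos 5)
Your proposal is correct and follows exactly the paper's own argument: invoke Theorem \ref{sgthmconv} to obtain the fiber semigroups $T^t(s)$ with uniform a.e.\ exponential bounds and the measurable field structure, then apply the `if' direction of Theorem \ref{sgthm} with $\Omega'$ in place of $\Omega$. The restriction bookkeeping you spell out is implicit in the paper's one-line proof, so your write-up is just a more detailed rendering of the same route.
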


\begin{proof}
	Theorem \ref{sgthmconv} gives the pointwise and uniform a.e.\ conditions for $\{A(s) : s\in\Omega'\}$ necessary to apply Theorem \ref{sgthm} with $\Omega'$ replacing $\Omega$.
\end{proof}

\begin{remark}
	A proof of this corollary can be obtained in an analogous way as the proof for \cite[Theorem 4.4]{LM}. The key step was to show that $\int_{\Omega'}^\oplus H_s \,d\mu(s)$ is invariant under $T$, the $C_0$-semigroup generated by $A$. Let $x \in \int_{\Omega'}^\oplus H_s\, d\mu(s)$. By the Post-Widder inversion formula \cite[Chapter III Corollary 5.5]{EN},
	$$T(t)x = \lim_{n\to \infty}\bigg[\frac{n}{t}R\Big(\frac{n}{t},A\Big)\bigg]^n x = \lim_{n\to \infty}\bigg[\int_\Omega^\oplus \frac{n}{t}R\Big(\frac{n}{t},A(s)\Big)\,d\mu(s)\bigg]^n x$$ for any $t\geq0$ where the resolvents in the integrand of the direct integral exist for a.e.\ $s \in \Omega$. By definition of the direct integral of operators, $$\bigg(\bigg[\int_\Omega^\oplus \frac{n}{t}R\Big(\frac{n}{t},A(s)\Big)\,d\mu(s)\bigg]^n x\bigg)(s)= 0$$ for a.e.\ $s \notin \Omega'$. Hence $T(t)x$ is the limit in $\dih$ of elements in $\int_{\Omega'}^\oplus H_s \,d\mu(s)$ and by closedness, is in $\int_{\Omega'}^\oplus H_s \,d\mu(s)$. We can apply \cite[Chapter II Proposition 2.3]{EN} to complete the proof.
	
	However, this method could not be directly used to prove Theorem \ref{sgthmconv} since $H_s$ does not embed into $\dih$ in any meaningful way and hence, like in Remark \ref{decomp}, difficulties potentially arise as to the existence of a semigroup for all $t\geq0$ on $H_s$ for any particular $s$.
\end{remark}

\section{Direct Integrals of Special Classes of Semigroups}\label{sec:5}

A natural question to ask is whether there are similar results to Theorems \ref{sgthm} and \ref{sgthmconv} for special classes of semigroups (see \cite[Chapter II Section 4]{EN}). Ahead of the following discussion, we note that these classes can be classified through spectral conditions in such a way that their direct integral theory once again, like in the general case, reduces to a.e.\ uniform conditions on the individual fibres.

%In this section, uniform a.e.\ means that a certain property holds uniformly for a.e.\ $s \in \Omega$.

\subsection{Bounded Analytic Semigroups}

First, we provide an easy consequence of Proposition \ref{specprop} that deals with sectoriality which we now define. A closed linear operator $A$ with dense domain $D(A)$ is \textit{sectorial of angle} $\delta \in [0,\frac{\pi}{2})$ if
\begin{enumerate}
	\item $\Sigma_{\pi/2 + \delta} \subset \rho(A)$, where $\Sigma_{\pi/2+\delta} = \left\{\lambda \in \C : |\arg \lambda | < \frac{\pi}{2}+\delta\right\}\setminus \{0\}$; and
	\item For each $\varepsilon \in (0,\delta)$, there exists $M_\varepsilon \geq 1$ such that
	$$\|R(\lambda,A)\|\leq\frac{M_\varepsilon}{|\lambda|}, \quad (0\neq \lambda \in \overline{\Sigma}_{\pi/2 + \delta - \varepsilon}).$$
\end{enumerate}

\begin{proposition}\label{secprop}
	Suppose that $A=\int_{\Omega}^\oplus A(s)\,d\mu(s)$. Then $A$ is sectorial of angle $\delta \in [0,\frac{\pi}{2})$ if and only if $\Sigma_{\pi/2+\delta}\setminus \{0\} \subset \rho(A(s))$ and for all $\varepsilon \in (0,\delta)$ there exists $M_\varepsilon \geq 1$ such that $$\|R(\lambda,A(s))\|_s \leq \frac{M_\varepsilon}{|\lambda|}, (\lambda \in \overline{\Sigma}_{\pi/2+\delta - \varepsilon}\setminus \{0\}),$$ for a.e.\ $s \in \Omega.$
\end{proposition}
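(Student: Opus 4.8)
The plan is to read the whole statement off Corollary~\ref{specprop}(i) and Proposition~\ref{bddprop}, using the dense-resolvent Proposition~\ref{denseresolvent} to handle a quantifier interchange exactly as in the proof of Theorem~\ref{sgthmconv}. The backward implication is routine. Fixing $\lambda$ in the open sector $\Sigma_{\pi/2+\delta}$ and choosing a rational $\varepsilon\in(0,\delta)$ with $\lambda\in\overline{\Sigma}_{\pi/2+\delta-\varepsilon}$, I would read off from the hypothesis that $\lambda\in\rho(A(s))$ and $\|R(\lambda,A(s))\|_s\le M_\varepsilon/|\lambda|$ for a.e.\ $s$; since this bound is finite, $\esssup_s\|R(\lambda,A(s))\|_s<\infty$, so Corollary~\ref{specprop}(i) yields $\lambda\in\rho(A)$ together with $R(\lambda,A)=\int_\Omega^\oplus R(\lambda,A(s))\,d\mu(s)$. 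As $\lambda$ ranges over the sector this gives $\Sigma_{\pi/2+\delta}\subset\rho(A)$, and for each closed subsector Proposition~\ref{bddprop} converts the a.e.\ fibrewise bound into $\|R(\lambda,A)\|=\esssup_s\|R(\lambda,A(s))\|_s\le M_\varepsilon/|\lambda|$, which is condition~(2). Closedness of $A$ is automatic once $\rho(A)\neq\emptyset$, and dense domain follows because the hypotheses make each fibre operator (a.e.) satisfy $\sup_{t>0}\|tR(t,A(s))\|_s<\infty$ on the reflexive space $H_s$, forcing $A(s)$ to be densely defined, whence $A$ is densely defined by the remark following the theorem on adjoints and inverses.

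The forward implication carries the real content, since sectoriality of $A$ gives, for each single $\lambda$, only an a.e.\ statement about the fibres, whereas the conclusion demands one null set outside of which the full sector lies in $\rho(A(s))$ and every resolvent bound holds. To interchange these quantifiers I would fix a countable set $D\subset\Sigma_{\pi/2+\delta}$ of points with rational real and imaginary parts, dense in the sector, and enumerate $(0,\delta)$ by rationals. For each $\lambda\in D$ we have $\lambda\in\rho(A)$, so Corollary~\ref{specprop}(i) and Proposition~\ref{bddprop} produce a null set $N_\lambda$ off which $\lambda\in\rho(A(s))$ and $\|R(\lambda,A(s))\|_s\le\|R(\lambda,A)\|$; setting $N=\bigcup_{\lambda\in D}N_\lambda$, still null, fixes one good set $\Omega\setminus N$.

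For each $s\notin N$ I would then promote these pointwise facts to the whole sector. Covering each $\overline{\Sigma}_{\pi/2+\delta-\varepsilon}\setminus\{0\}$ by countably many annular subsectors $\{r\le|\lambda|\le R\}$, on which $M_\varepsilon/|\lambda|$ is dominated by the uniform constant $M_\varepsilon/r$, Proposition~\ref{denseresolvent} applied to the dense subfamily $D$ in each annular piece forces the whole piece into $\rho(A(s))$, and hence $\Sigma_{\pi/2+\delta}\setminus\{0\}=\bigcup_\varepsilon\big(\overline{\Sigma}_{\pi/2+\delta-\varepsilon}\setminus\{0\}\big)\subset\rho(A(s))$. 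The sharp bound $\|R(\lambda,A(s))\|_s\le M_\varepsilon/|\lambda|$ then extends from $D$ to the closed subsector by continuity of $\lambda\mapsto R(\lambda,A(s))$ on $\rho(A(s))$, just as in the proof of Theorem~\ref{sgthmconv}.

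I expect this quantifier interchange to be the main obstacle, and in particular the need to manufacture the \emph{uniform} bounds that Proposition~\ref{denseresolvent} requires in spite of the nonuniform factor $1/|\lambda|$, which is why exhausting the sector by annular subsectors with $|\lambda|$ bounded away from $0$ and $\infty$ is the crucial device. Everything else reduces to the dictionary already established in Corollary~\ref{specprop} and Proposition~\ref{bddprop}, so no further analytic input should be needed.
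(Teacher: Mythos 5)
Your proposal is correct and takes essentially the same route as the paper: the `if' direction via Corollary \ref{specprop}(i) followed by Proposition \ref{bddprop}, and the `only if' direction by the quantifier interchange over a countable rational-dense subset of the sector (one null set per rational point, unioned), then Proposition \ref{denseresolvent} plus continuity of the resolvent, exactly as in the proof of Theorem \ref{sgthmconv}. Your annular subsectors $\{r\le|\lambda|\le R\}$ are precisely the ``compact subsets of the sectors'' the paper invokes to obtain the uniform bound that Proposition \ref{denseresolvent} requires despite the nonuniform factor $1/|\lambda|$.
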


%for all $\varepsilon \in (0,\delta)$ there exists $M_\varepsilon \geq 1$ such
\begin{proof}
	The `if' direction follows immediately from Corollary \ref{specprop}(i) followed by Proposition \ref{bddprop} as the uniform resolvent bound for a.e.\ $s$ given $\lambda \in \Sigma_{\pi/2 + \delta}$ guarantees that such $\lambda \in \rho(A)$.
	
	Now let $Q = \{a + ib \in \C : a,b \in\Q\}.$ The `only if' direction is then proved by showing, in the same way as the proof for Theorem \ref{sgthmconv}, that $\Sigma_{\pi/2+\delta}\cap Q\setminus \{0\} \subset \rho(A(s))$ and that for each $\varepsilon \in (0,\delta),$ the same constant $M_\varepsilon$ for bounding $\|\lambda R(\lambda,A)\|$ also bounds $\|\lambda R(\lambda,A(s))\|_s$ for all $\lambda \in \overline{\Sigma}_{\pi/2+\delta - \varepsilon} \cap Q \setminus \{0\}$ for a.e.\ $s \in \Omega.$ Again, as in the proof of Theorem \ref{sgthmconv}, we are done by Proposition \ref{denseresolvent} and continuity of the resolvent applied to compact subsets of the sectors.
\end{proof}

In other words, $A$ is sectorial of angle $\delta \in [0,\frac{\pi}{2})$ if and only if $A(s)$ is uniformly a.e.\ sectorial of angle $\delta$ for a.e.\ $s$. We restate this in terms of bounded analytic semigroups without proof (see \cite[Chapter II Section 4a.]{EN} and \cite[Chapter 3]{IK}).
%Whether the converse is true is an open question, since the condition of sectoriality depends on uncountably many $\lambda \in \C$. Hence it may not be for a.e.\ $s \in \Omega$ that sectoriality is satisfied.

%Now according to \cite[Chapter II, Theorem 4.6]{EN}, the previous proposition should result in an equivalent theorem in terms of bounded analytic semigroups.

\begin{theorem}\label{basgthm}
	Suppose that $A=\int_{\Omega}^\oplus A(s)\,d\mu(s)$. Then $A$ generates a bounded analytic semigroup $T$ of angle $\delta \in [0,\frac{\pi}{2})$ if and only if $A(s)$ generates an analytic semigroup $T^{(\cdot)}(s)$ of angle $\delta$ such that for every $0 < \delta' < \delta$, there exists $M_{\delta'} \geq 1$ with
	$$\|T^z(s)\|_s \leq M_{\delta'}, \quad (z \in \Sigma_{\delta'}),$$ uniformly for a.e.\ $s \in \Omega$.
\end{theorem}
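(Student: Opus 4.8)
The plan is to treat this as a quantitative, fibrewise application of the classical correspondence between bounded analytic semigroups and sectorial operators, combined with Proposition \ref{secprop}. The backbone is the standard equivalence (see \cite[Chapter II Section 4a.]{EN}): a densely defined operator $B$ generates a bounded analytic semigroup of angle $\delta$ if and only if $B$ is sectorial of angle $\delta$ in the sense defined above. Applying this once to $A$ on $\dih$ reduces the left-hand side of the theorem to the statement that $A$ is sectorial of angle $\delta$, and Proposition \ref{secprop} then translates this directly into the condition that the fibres $A(s)$ are uniformly a.e.\ sectorial of angle $\delta$, i.e.\ $\Sigma_{\pi/2+\delta}\setminus\{0\} \subset \rho(A(s))$ with a common subsector bound $M_\varepsilon$ for a.e.\ $s$. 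Thus the entire theorem comes down to showing that this uniform a.e.\ sectoriality of the fibres is equivalent to the stated uniform semigroup bound $\|T^z(s)\|_s \leq M_{\delta'}$ on $\Sigma_{\delta'}$.

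For this final equivalence I would invoke the Dunford-type contour integral representation of an analytic semigroup. If each $A(s)$ is sectorial of angle $\delta$, then for $z \in \Sigma_{\delta'}$ its semigroup is given by
$$T^z(s) = \frac{1}{2\pi i}\int_\gamma e^{\lambda z} R(\lambda,A(s))\,d\lambda,$$
where $\gamma$ is a suitable contour running into the sector $\overline{\Sigma}_{\pi/2+\delta-\varepsilon}$. Estimating this integral using only the resolvent bound $\|R(\lambda,A(s))\|_s \leq M_\varepsilon/|\lambda|$ yields a bound of the form $\|T^z(s)\|_s \leq M_{\delta'}$ in which $M_{\delta'}$ depends only on the angle $\delta'$ and on $M_\varepsilon$, and not on the particular fibre $s$. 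This is exactly the point where uniformity in $s$ is preserved: because the resolvent bounds are common to a.e.\ $s$, so is the resulting semigroup bound. The converse direction runs the same argument backwards, recovering the sectorial resolvent estimate from $\|T^z(s)\|_s \leq M_{\delta'}$ via the Laplace transform representation $R(\lambda,A(s)) = \int_0^\infty e^{-\lambda t}T^t(s)\,dt$ analytically continued along rays into the sector; again the recovered constant depends only on $M_{\delta'}$ and the geometry, hence stays uniform in $s$.

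I expect the main obstacle to be bookkeeping the uniformity of constants rather than any genuine analytic difficulty: one must verify that the classical equivalence between sectoriality and bounded analyticity is genuinely \emph{quantitative}, meaning the constant produced on each side is a function only of the angle and of the constant supplied on the other side. Once this is isolated, the uniform a.e.\ statements pass cleanly through Proposition \ref{secprop} in both directions. Measurability of the fields $\{T^t(s) : s \in \Omega\}$ and $\{R(\lambda,A(s)) : s \in \Omega\}$ needs no separate argument here, since it is already guaranteed by the standing assumption that $A = \int_\Omega^\oplus A(s)\,d\mu(s)$ is the direct integral of a measurable field of closed operators, and the required dense-domain hypotheses transfer between $A$ and a.e.\ $A(s)$ as noted earlier in the excerpt.
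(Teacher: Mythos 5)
Your proposal follows exactly the paper's route: the paper derives this theorem by combining Proposition \ref{secprop} with the classical quantitative equivalence between sectoriality of angle $\delta$ and generation of a bounded analytic semigroup of angle $\delta$ (citing \cite[Chapter II Section 4a.]{EN} and \cite[Chapter 3]{IK} rather than reproving it), which is precisely your reduction, with the contour-integral and rotated-Laplace-transform arguments supplying the uniformity of constants that the paper leaves implicit. Your proof is correct and essentially identical in strategy, just more explicit about the quantitative bookkeeping.
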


These results can also be extended for general unbounded semigroups (see \cite[Propositions 3.16, 3.18, Theorem 3.19]{IK}). An analogous version of Proposition \ref{secprop} is also true for operators $A$ whose resolvent set contain sectors minus $\{0\}$ with angle smaller than $\pi/2$ and whose resolvent operators satisfy analogous bounds on these sectors. Our more restricted definition of sectoriality is due to the $C_0$-semigroup setting we are working in (see \cite[Chapter II, Definition 4.1]{EN}).

\subsection{Eventually Differentiable Semigroups}

In light of the characterisation of eventually differentiable semigroups found in \cite[Chapter II Theorem 4.14]{EN}, a similar result to Theorem \ref{basgthm} can be given for eventually differentiable semigroups.

\begin{theorem}\label{edsgthm}
	Suppose that $A=\int_{\Omega}^\oplus A(s)\,d\mu(s)$. Then $A$ generates an eventually differentiable semigroup exponentially bounded by $Me^{\omega t}$ for $\omega \in \R$ if and only if $A(s)$ generates a $C_0$-semigroup also exponentially bounded by $Me^{\omega t}$ and there exist constants $a,b,C>0$ with $$\Theta := \big\{\lambda \in \C : a e^{-b \real\lambda} \leq |\imag\lambda|\big\} \subset \rho(A(s))$$ and $$\|R(\lambda,A(s))\|_s \leq C|\imag\lambda|$$ for all $\lambda \in \Theta$ with $\real\lambda \leq \omega$ uniformly for a.e.\ $s\in \Omega$.
\end{theorem}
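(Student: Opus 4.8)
The plan is to reduce both implications to the resolvent characterisation of eventually differentiable semigroups, \cite[Chapter II Theorem 4.14]{EN}, and then to transfer the resolvent data between $A$ and its fibres $A(s)$ by exactly the machinery used in Theorem \ref{sgthmconv} and Proposition \ref{secprop}. That characterisation says: a generator $B$ of a $C_0$-semigroup bounded by $Me^{\omega t}$ generates an eventually differentiable semigroup with the stated bounds if and only if $\Theta \subset \rho(B)$ and $\|R(\lambda,B)\| \leq C|\imag\lambda|$ for all $\lambda \in \Theta$ with $\real\lambda \leq \omega$. Thus the whole theorem amounts to the equivalence of this pair of conditions on $A$ with its uniform a.e.\ analogue on the fibres, the semigroup hypotheses themselves already being equivalent by Theorems \ref{sgthm} and \ref{sgthmconv}.

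For the `if' direction, I assume each $A(s)$ generates a $C_0$-semigroup bounded by $Me^{\omega t}$ together with the uniform a.e.\ resolvent conditions on $\Theta$. The uniform exponential bound makes the direct integral of $\{T^t(s)\}$ bounded for each $t$ by Proposition \ref{bddprop}, so Theorem \ref{sgthm} supplies a $C_0$-semigroup $T$ with generator $A$ and $\|T(t)\| \leq Me^{\omega t}$. Now fix $\lambda \in \Theta$ with $\real\lambda \leq \omega$: for a.e.\ $s$ one has $\lambda \in \rho(A(s))$ with $\|R(\lambda,A(s))\|_s \leq C|\imag\lambda|$, so the essential supremum is finite and Corollary \ref{specprop}(i) together with Proposition \ref{bddprop} give $\lambda \in \rho(A)$ and $\|R(\lambda,A)\| \leq C|\imag\lambda|$. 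Points of $\Theta$ with $\real\lambda > \omega$ lie in $\rho(A)$ automatically, so $\Theta \subset \rho(A)$ with the required bound, and \cite[Chapter II Theorem 4.14]{EN} finishes this direction.

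For the `only if' direction, suppose $A$ generates an eventually differentiable semigroup bounded by $Me^{\omega t}$. Theorem \ref{sgthmconv} immediately yields that a.e.\ $A(s)$ generates a $C_0$-semigroup with the same bound, disposing of the first fibre condition. Applying \cite[Chapter II Theorem 4.14]{EN} to $A$ gives $\Theta \subset \rho(A)$ and $\|R(\lambda,A)\| \leq C|\imag\lambda|$ for $\real\lambda \leq \omega$. To descend to the fibres, set $Q = \{a+ib : a,b \in \Q\}$ and work over the countable dense set $\Theta \cap Q \cap \{\real\lambda \leq \omega\}$: for each such rational $\lambda$, Corollary \ref{specprop}(i) and Proposition \ref{bddprop} produce a co-null set on which $\lambda \in \rho(A(s))$ and $\|R(\lambda,A(s))\|_s \leq C|\imag\lambda|$, and intersecting over the countably many rationals yields a single co-null set $G$ carrying all of these bounds at once.

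The main obstacle is the passage from this rational grid to all of $\Theta$ via Proposition \ref{denseresolvent}, whose hypothesis demands a \emph{uniform} resolvent bound whereas $C|\imag\lambda|$ grows. The remedy is to localise exactly as in Proposition \ref{secprop}: exhaust $\Theta \cap \{\real\lambda \leq \omega\}$ by bounded pieces on which $|\imag\lambda| \leq R$, so that there $C|\imag\lambda| \leq CR$ is a genuine constant. On (the closure of) each piece the rationals are dense, so Proposition \ref{denseresolvent} places that piece inside $\rho(A(s))$ for every $s \in G$, and continuity of $\lambda \mapsto R(\lambda,A(s))$ upgrades the constant bound back to the sharp $\|R(\lambda,A(s))\|_s \leq C|\imag\lambda|$; letting $R \to \infty$ covers the part of $\Theta$ with $\real\lambda \leq \omega$, while the complementary part lies in $\rho(A(s))$ because $A(s)$ generates a semigroup bounded by $Me^{\omega t}$. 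Hence for every $s \in G$, and so for a.e.\ $s$, the fibre satisfies $\Theta \subset \rho(A(s))$ with the required uniform bound, completing the proof.
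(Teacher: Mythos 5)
Your proof is correct and takes essentially the same route as the paper's, which simply defers to the proofs of Proposition \ref{secprop} and Theorem \ref{sgthmconv}: the same reduction to the characterisation in \cite[Chapter II Theorem 4.14]{EN}, the same use of Corollary \ref{specprop}(i) with Proposition \ref{bddprop} in the `if' direction, and the same rational-grid/Proposition \ref{denseresolvent}/resolvent-continuity argument, localised to bounded pieces of $\Theta$, in the `only if' direction. The only cosmetic difference is how the part of $\Theta$ with $\real\lambda > \omega$ is handled in the `if' direction: you place such $\lambda$ in $\rho(A)$ directly from the bound $Me^{\omega t}$ on the direct integral semigroup, whereas the paper checks $\esssup_{s\in\Omega}\|R(\lambda,A(s))\|_{\B(H_s)} \leq M/(\real\lambda-\omega)$ on the fibres and invokes Corollary \ref{specprop}(i); both are standard consequences of the Hille--Yosida bounds and yield the same conclusion.
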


\begin{proof}
	The proof is almost exactly the same as that of Proposition \ref{secprop}. However, for the `if' direction, we must also check that $$\underset{s\in \Omega}{\esssup}\|R(\lambda,A(s))\|_{B(H_s)} < \infty$$ holds for all $\lambda \in \Theta$. For $\real\lambda \leq \omega$, the uniform a.e.\ resolvent bound is given by $C|\imag\lambda|.$ For $\real\lambda > \omega$,
	$$\|R(\lambda,A(s))\|_s \leq \frac{M}{\real\lambda - \omega}$$ for a.e.\ $s\in \Omega$ by virtue of standard semigroup generator properties.
\end{proof}

In other words, the direct integral operator generates an eventually differentiable semigroup if individual $A(s)$ are eventually differentiable in a uniform a.e.\ way.

\subsection{Immediately Norm-Continuous Semigroups}

We introduce the natural notion of norm-continuous in $t$ uniformly a.e.\ on $\Omega$ in order to formulate the most obvious result concerning immediately norm-continuous semigroups. We say that $\{T^t(s) : s \in \Omega\}$ is norm-continuous for $t > t_0$ uniformly a.e.\ on $\Omega$ if for every $t>t_0$ and $\varepsilon > 0$, there exists $\delta> 0$ such that
$$\|T^t(s)-T^{t_1}(s)\|_{B(H_s)} < \varepsilon, \quad (|t-t_1|<\delta),$$ for a.e.\ $s \in \Omega$. Note that $\delta$ is independent of $s$.

\begin{proposition}
	Suppose that $A=\int_{\Omega}^\oplus A(s)\,d\mu(s)$. Assume that $A(s)$ generates a semigroup $T^{(\cdot)}(s)$ for a.e.\ $s \in \Omega$ that are uniformly a.e.\ exponentially bounded and that $\{T^t(s) : s \in \Omega\}$ is norm-continuous for $t > t_0$ uniformly a.e.\ on $\Omega$. Then $A$ generates an eventually norm-continuous semigroup that is norm-continuous for $t>t_0$.
\end{proposition}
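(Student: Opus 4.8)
The plan is to proceed in two stages: first invoke Theorem \ref{sgthm} to realise $A$ as the generator of the semigroup $T(t) = \int_\Omega^\oplus T^t(s)\,d\mu(s)$, and then use Proposition \ref{bddprop} to transfer the uniform a.e.\ norm-continuity on the fibres into genuine norm-continuity of $T$ for $t > t_0$.

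For the first stage I would verify the hypotheses of Theorem \ref{sgthm}. The uniform a.e.\ exponential bound $\|T^t(s)\|_{\B(H_s)} \leq Me^{\omega t}$ is assumed, so the only point to check is that $\{T^t(s) : s \in \Omega\}$ forms a bounded measurable field of operators for each fixed $t \geq 0$, so that the direct integral $\int_\Omega^\oplus T^t(s)\,d\mu(s)$ exists as a bounded operator. Boundedness is immediate from the exponential bound; measurability can be obtained exactly as in the proof of Theorem \ref{sgthmconv}: since $A = \int_\Omega^\oplus A(s)\,d\mu(s)$ the field $\{R(\lambda,A(s)) : s\in\Omega\}$ is measurable, and writing the resolvent as the Laplace transform of $T^{(\cdot)}(s)$ and inverting via the Tauberian theorem \cite[Theorem 4.2.21(b)]{ABHN} shows that $s \mapsto \langle T^t(s)x(s),y(s)\rangle_s$ is measurable for each $t$ and all $x,y \in \F$. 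With this in hand, Theorem \ref{sgthm} yields that $T(t) = \int_\Omega^\oplus T^t(s)\,d\mu(s)$ is a $C_0$-semigroup whose generator is $\int_\Omega^\oplus A(s)\,d\mu(s) = A$.

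For the second stage, fix $t > t_0$. Since $T(t) - T(t_1) = \int_\Omega^\oplus \big(T^t(s) - T^{t_1}(s)\big)\,d\mu(s)$ is itself the direct integral of a bounded measurable field, Proposition \ref{bddprop} gives
$$\|T(t) - T(t_1)\| = \esssup_{s\in\Omega}\|T^t(s) - T^{t_1}(s)\|_{\B(H_s)}.$$
Given $\varepsilon > 0$, the hypothesis of uniform a.e.\ norm-continuity for $t > t_0$ supplies a $\delta > 0$, independent of $s$, with $\|T^t(s) - T^{t_1}(s)\|_{\B(H_s)} < \varepsilon$ for a.e.\ $s$ whenever $|t - t_1| < \delta$; since passing to the essential supremum discards only a null set, this forces $\|T(t) - T(t_1)\| \leq \varepsilon$ for all such $t_1$. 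Hence $t_1 \mapsto T(t_1)$ is norm-continuous at every $t > t_0$, so $T$ is norm-continuous for $t > t_0$ and in particular eventually norm-continuous.

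The conceptual heart of the argument is the identity $\|T(t) - T(t_1)\| = \esssup_{s\in\Omega}\|T^t(s) - T^{t_1}(s)\|_{\B(H_s)}$ furnished by Proposition \ref{bddprop}: it is precisely this that makes the $s$-independence of $\delta$ in the definition of uniform a.e.\ norm-continuity exactly the right hypothesis, converting fibrewise continuity into norm continuity of the direct integral. I expect the one genuinely technical obstacle to be the measurability check of the first stage, which is what guarantees that $T(t)$ and its increments are bona fide direct integral operators to which Proposition \ref{bddprop} applies; since that check simply reproduces the Tauberian argument already carried out for Theorem \ref{sgthmconv}, I would refer to that proof rather than repeat it.
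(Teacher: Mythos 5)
Your proof is correct and follows essentially the same route as the paper: invoke Theorem \ref{sgthm} to obtain $T(t)=\int_\Omega^\oplus T^t(s)\,d\mu(s)$ with generator $A$, and then transfer the uniform a.e.\ fibrewise norm-continuity to the direct integral. The paper carries out the second step by the direct estimate $\|(T^t-T^{t_1})x\|^2\le\int_\Omega\|T^t(s)-T^{t_1}(s)\|_s^2\|x(s)\|_s^2\,d\mu(s)$, which is precisely the easy direction of Proposition \ref{bddprop} that you cite in its place; your explicit measurability check via the Tauberian argument from Theorem \ref{sgthmconv} is a small (and welcome) refinement of a point the paper leaves implicit when it invokes Theorem \ref{sgthm}.
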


\begin{proof}
	The direct integral semigroup $T^{(\cdot)}$ exists by Theorem \ref{sgthm}. Let $t>t_0, \varepsilon > 0$, and $x \in \dih$. Then by norm-continuity for $t>t_0$ on $\Omega$ a.e.,\ there exists $\delta >0$ such that
	\begin{align*}
	\|(T^t-T^{t_1})x\|^2 & = \int_\Omega \|(T^t(s)-T^{t_1}(s))x(s)\|_s^2\, d\mu(s) \\ & \leq \int_\Omega \|T^t(s) - T^{t_1}(s)\|_s^2\|x(s)\|_s^2\, d\mu (s) \\ & < \varepsilon^2 \|x\|^2
	\end{align*} for all $|t-t_1| < \delta.$ Hence $\|T^t - T^{t_1}\| < \varepsilon$ for all $|t-t_1| < \delta$, proving that $T^t$ is norm-continuous for $t>t_0$.
\end{proof}

\cite[Chapter II Theorem 4.20]{EN} gives us a useful characterisation of immediately norm-continuous semigroups on Hilbert spaces, which we can use to prove the following result.

\begin{theorem}\label{incsgprop}
	Suppose that $A=\int_{\Omega}^\oplus A(s)\,d\mu(s)$. Then $A$ generates an immediately norm-continuous semigroup exponentially bounded by $Me^{-\varepsilon t}$ for some $\varepsilon>0$ if and only if $A(s)$ generates a $C_0$-semigroup $T^{(\cdot)}(s)$ also exponentially bounded by $Me^{-\varepsilon t}$ and
	$$\lim_{r \to \pm \infty}\|R(ir,A(s))\|_{\B(H_s)} =0$$ uniformly for a.e.\ $s\in \Omega$.
\end{theorem}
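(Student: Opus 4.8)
The plan is to argue exactly as in the proof of Proposition \ref{secprop}, with the sectoriality characterisation replaced by the Hilbert-space characterisation of immediately norm-continuous semigroups \cite[Chapter II Theorem 4.20]{EN}: a $C_0$-semigroup on a Hilbert space with generator $B$ and growth bound $\omega_0$ is immediately norm-continuous if and only if $\lim_{r\to\pm\infty}\|R(a+ir,B)\| = 0$ for some (equivalently all) $a > \omega_0$. Since every semigroup here is bounded by $Me^{-\varepsilon t}$, the relevant growth bounds are at most $-\varepsilon < 0$, so the whole imaginary axis lies strictly to the right of each spectrum; in particular $ir \in \rho(A)$ and $ir \in \rho(A(s))$ for a.e.\ $s$ and every $r\in\R$, and one may take $a = 0$ throughout. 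The workhorse of the proof is the identity
$$\|R(ir,A)\| = \esssup_{s}\|R(ir,A(s))\|_{\B(H_s)}, \quad (r\in\R),$$
obtained by feeding the decomposition $R(ir,A) = \int_\Omega^\oplus R(ir,A(s))\,d\mu(s)$ from Corollary \ref{specprop}(i) into Proposition \ref{bddprop}.

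For the `if' direction I would first apply Theorem \ref{sgthm}: the uniform a.e.\ bound $Me^{-\varepsilon t}$ guarantees that the direct integral $T$ exists as a $C_0$-semigroup with generator $A$ and the same exponential bound. The uniform a.e.\ decay hypothesis says that for every $\eta > 0$ there is $R_0$ with $\|R(ir,A(s))\|_s < \eta$ for a.e.\ $s$ whenever $|r| > R_0$; by the identity above this is exactly $\|R(ir,A)\| \le \eta$ for $|r| > R_0$, so $\lim_{r\to\pm\infty}\|R(ir,A)\| = 0$ and the characterisation makes $T$ immediately norm-continuous.

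For the `only if' direction, Theorem \ref{sgthmconv} first supplies, for a.e.\ $s$, a $C_0$-semigroup generated by $A(s)$ with the bound $Me^{-\varepsilon t}$. Immediate norm-continuity of the semigroup generated by $A$ gives $\lim_{r\to\pm\infty}\|R(ir,A)\| = 0$ via the characterisation, and the identity above turns this into $\esssup_{s}\|R(ir,A(s))\|_s \to 0$. It remains to promote this essential-supremum decay to the uniform a.e.\ decay demanded by the statement, and this is the one delicate point.

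The difficulty is that for each fixed $r$ the bound $\|R(ir,A(s))\|_s \le \|R(ir,A)\|$ holds only off an $r$-dependent null set, while $r$ runs over an uncountable set, so a crude union of these null sets is useless. The plan is to resolve this exactly as in Theorem \ref{sgthmconv} and Proposition \ref{secprop}: fix $\eta > 0$, pick $R_0$ with $\|R(ir,A)\| < \eta/2$ for $|r| > R_0$, and first control only the rational $r$ with $|r| > R_0$. Being countable, these contribute a single null set off which $\|R(ir,A(s))\|_s < \eta/2$ simultaneously for all such rational $r$; for those $s$ the norm-continuity of $r \mapsto R(ir,A(s))$ on the open resolvent set then extends the bound to $\|R(ir,A(s))\|_s \le \eta/2 < \eta$ for every real $r$ with $|r| > R_0$. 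Since $R_0$ is independent of $s$, this is precisely uniform a.e.\ decay for the given $\eta$; intersecting the resulting full-measure sets along a sequence $\eta \to 0$ makes the exceptional set independent of $\eta$ and completes the proof.
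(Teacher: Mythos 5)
Your proposal is correct and follows essentially the same route as the paper: the `if' direction via Theorem \ref{sgthm}, the identity $\|R(ir,A)\| = \esssup_s\|R(ir,A(s))\|_{\B(H_s)}$ from Corollary \ref{specprop}(i) and Proposition \ref{bddprop}, and the characterisation \cite[Chapter II Theorem 4.20]{EN}; the `only if' direction via Theorem \ref{sgthmconv} together with the rational-skeleton argument (a single null set for $r \in \Q$, then resolvent continuity and density of $i\Q$ in $i\R$). Your extra step of intersecting full-measure sets along a sequence $\eta \to 0$ is a harmless reorganisation of what the paper gets directly from the pointwise bound $\|R(ir,A(s))\|_s \le \|R(ir,A)\|$.
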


\begin{proof}
	For the `if' direction, Theorem \ref{sgthm} ensures that $A$ generates a $C_0$-semigroup that is exponentially bounded by $Me^{-\varepsilon t}$. By Proposition \ref{bddprop}, the fact that
	$$\lim_{r \to \pm \infty}\|R(ir,A)\| =0,$$ and \cite[Chapter II Theorem 4.20]{EN}, $A$ generates an immediately norm-continuous semigroup. For the converse, it is enough to see that as in the proof of Theorem \ref{sgthmconv}, there is a set $G\subset \Omega$ with $\mu(\Omega\setminus G) = 0$ such that  for all $s\in G,$ $i\Q \subset\rho(A(s))$ and $\|R(ir,A(s))\|_s \leq \|R(ir,A)\|$ for all $r \in \Q$. Continuity of the resolvent and density of $i\Q$ in $i\R$ completes the proof.
	%  By Theorem \ref{laptrans},
	% $$\|R(ir,A(s))\|_s\leq M\varepsilon^{-1}$$ for all $r\in \R$ and for a.e.\ $s \in \Omega$. By \cite[Chapter II, Theorem 4.20]{EN},
	%$$\lim_{r \to \pm \infty}\|R(ir,A(s))\|_s =0$$ for a.e.\ $s$. Hence by Lebesgue's dominated convergence theorem, $\|R(ir,A)\| \to 0$ as $r \to \pm \infty$. Again by \cite[Chapter II, Theorem 4.20]{EN}, $A$ generates an immediately norm-continuous semigroup.
\end{proof}

\subsection{Immediately Compact Semigroups}

\cite[Chapter II Theorem 4.29]{EN} tells us that a $C_0$-semigroup $T$ is immediately compact if and only if $T$ is immediately norm-continuous and its generator $A$ has compact resolvent. For an operator $B$, the resolvent $R(\lambda,B)$ for any $\lambda \in \rho(B)$ cannot be zero. Hence, in light of Theorem \ref{comthm}, the only possible setting for a direct integral of semigroups to be immediately compact when $\mu$ is a Radon measure is the direct sum case. Combining \cite[Chapter II Theorem 4.29]{EN} with Theorem \ref{comthm} and Theorem \ref{incsgprop}, we get the following result with trivial proof.

\begin{theorem}
	Let $\mu$ be the counting measure on $\N$ and suppose that $A=\int_{\N}A(n)\,d\mu(n)$. Then $A$ generates an immediately compact semigroup if and only if $\displaystyle\lim_{r\to \pm \infty}\|R(ir,A(n))\|_{\B(H_n)} = 0$ uniformly for all $n\in\N$ and there exists $\lambda \in \C$ such that $\lambda \in \rho(A(n))$ and $R(\lambda,A(n))$ is compact for all $n\in\N$ and $\displaystyle\lim_{n\to \infty}\|R(\lambda,A(n))\|_{\B(H_n)} \to 0$. In particular, if $A$ generates an immediately compact semigroup, then so does $A(n)$ for all $n\in\N$.
\end{theorem}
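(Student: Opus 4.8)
The plan is to combine the cited characterization of immediate compactness with the two direct-integral criteria already established in the paper, exploiting that for the counting measure on $\N$ the space $\dih$ is the direct sum $\bigoplus_{n=1}^\infty H_n$ and $A$ is the corresponding direct sum $\bigoplus_{n=1}^\infty A(n)$. The anchor is \cite[Chapter II Theorem 4.29]{EN}: $A$ generates an immediately compact semigroup if and only if $A$ generates an immediately norm-continuous semigroup and $R(\lambda,A)$ is compact for one (equivalently every) $\lambda \in \rho(A)$. The entire proof then consists of translating each of these two conditions into a fibrewise statement.

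For the norm-continuity half I would invoke Theorem \ref{incsgprop} directly. Reading the statement with the semigroup normalised so that the bound is $Me^{-\varepsilon t}$ (harmless, since immediate compactness and immediate norm-continuity are both unaffected by multiplying the semigroup by $e^{-\omega t}$, and this arranges $i\R \subset \rho(A)$ so that the imaginary-axis resolvents are actually defined), Theorem \ref{incsgprop} says that $A$ generates an immediately norm-continuous semigroup precisely when each $A(n)$ generates a $C_0$-semigroup with the same bound and $\lim_{r\to\pm\infty}\|R(ir,A(n))\|_{\B(H_n)} = 0$ uniformly in $n$. This is exactly the first displayed condition in the statement.

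For the compact-resolvent half I would use Corollary \ref{specprop}(i) to write $R(\lambda,A) = \int_\N^\oplus R(\lambda,A(n))\,d\mu(n) = \bigoplus_{n=1}^\infty R(\lambda,A(n))$ and then apply Theorem \ref{comthm}. Since the counting measure on $\N$ is Radon, every singleton carries positive mass, and since a resolvent operator is never the zero operator, the set $\Omega_2$ appearing in Theorem \ref{comthm} must be empty; the criterion therefore collapses to the clean statement that $R(\lambda,A)$ is compact if and only if each $R(\lambda,A(n))$ is compact and $\|R(\lambda,A(n))\|_{\B(H_n)} \to 0$ as $n\to\infty$. This supplies the remaining two conditions of the theorem. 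Assembling the two halves gives both directions of the equivalence at once, and the final assertion that each $A(n)$ then generates an immediately compact semigroup follows by applying \cite[Chapter II Theorem 4.29]{EN} on the single fibre $H_n$: there $A(n)$ is immediately norm-continuous (from its imaginary-axis resolvent decay via \cite[Chapter II Theorem 4.20]{EN}) and has compact resolvent $R(\lambda,A(n))$.

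Because all three ingredients are already in hand, there is no genuinely hard step -- consistent with this being a corollary with trivial proof. The only points requiring care are bookkeeping ones: first, matching the imaginary-axis resolvent condition of the statement with the $Me^{-\varepsilon t}$-normalised hypothesis of Theorem \ref{incsgprop}, which is why I would normalise at the outset so that the decay is read along the imaginary axis; and second, verifying that the degenerate alternatives of Theorem \ref{comthm} cannot arise here, namely that $\Omega_2 = \varnothing$ because resolvents are nonzero and every point of $\N$ has positive measure, so that the direct-sum compactness criterion applies verbatim.
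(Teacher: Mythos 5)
Your proposal is correct and is essentially identical to the paper's own proof: the paper obtains this theorem ``with trivial proof'' by combining \cite[Chapter II Theorem 4.29]{EN} with Theorem \ref{incsgprop} for the norm-continuity half and Theorem \ref{comthm} for the compact-resolvent half, which is exactly your assembly. Your two bookkeeping points --- rescaling to match the $Me^{-\varepsilon t}$ normalisation of Theorem \ref{incsgprop}, and that the degenerate set in Theorem \ref{comthm} is empty because resolvents are never zero and every point of $\N$ is an atom --- are precisely the observations the paper makes in the paragraph preceding the theorem.
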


%Again, as in Proposition \ref{edsgprop}, the converse runs into problems, as $L^2$ convergence implies only a.e.\ convergence of a subsequence. However, the proof of \cite[Chapter II, Theorem 4.20]{EN} requires the limit to be for all $r \in \R$ tending to infinity, not just an increasing sequence.

\section{Examples and Applications}\label{sec:6}

In this section, we discuss two examples where Theorem \ref{sgthm} can be applied. Regrettably, these are somewhat special cases, where extra structure allows us to work more easily with the relevant spaces. The point, however, is that examples exist.

The first example is as follows. Let $\Omega=(0,1)$ and take $\mu$ to be the Lebesgue measure on $(0,1)$. It is worth informing the reader here that there is some ambiguity in the literature about what measures and measure spaces are used in direct integral theory. Most papers describe $\mu$ as a Borel measure on a locally compact topological space, as we have done so in Section \ref{sec:2}. This, however, does not mean we are only restricted to working with the Borel $\sigma$-algebra, as the known results hold on measure spaces that contain the Borel $\sigma$-algebra (see for example \cite{AC,GGST,Y}). Now define $\mathcal{H}$ by fixing $H_s$ to be the constant Hilbert space $L^2(\R)$, the space of square integrable complex-valued functions on the real line, for all $s\in(0,1)$. For the choice of measurable sections, we take $\F$ to be all sections $x:(0,1)\to \mathcal{H}$ such that
$$s\mapsto \langle x_s,g\rangle_{L^2}$$ is measurable for all $g\in L^2(\R)$. In this case, $\dih = L^2((0,1);L^2(\R)) = L^2((0,1)\times \R)$, the space of square integrable complex-valued functions on $(0,1)\times\R$. This is also known as the Hilbert space tensor product $L^2(0,1)\otimes L^2(\R)$.

Define the operator $A$ on $L^2(\R)$ by
\begin{align*} D(A) & = \{f\in L^2(\R) : f \text{ absolutely continuous}, f'\in L^2(\R)\} \\ Af & = f', \quad (f \in D(A)).
\end{align*}
This is widely known to generate the left-shift semigroup $T^{(\cdot)}$ given by $(T^tf)(r) = f(r+t)$ for $f\in L^2(\R)$ and $r\in\R$ (see for example \cite[Chapter II Section 2]{EN}). Now for each $s\in(0,1)$, define the operator $A_s$ by $D(A_s)=D(A)$ and $A_sf = (A-sI)f$ for $f\in D(A_s)$. Elementary differential equation theory shows that $\{A_s :D(A_s) \subset L^2(\R) \to L^2(\R) : s\in(0,1)\}$ is a measurable field of closed operators ($\sigma(A)=i\R$ so that $\sigma(A_s) = -s+i\R$ allowing for a common element of the resolvent sets to exist). Now each $A_s$ generates the re-scaled semigroup $T^{(\cdot)}_s := e^{- s\cdot}T^{(\cdot)}$ on $L^2(\R)$ and the family $\{T^{(\cdot)}_s : \R_+ \to \B(L^2(\R)): s\in (0,1)\}$ is uniformly bounded by $1$ as $T^{(\cdot)}$ is bounded by $1$. Futhermore for every $t>0$,
$$s\mapsto \langle e^{-st}T^t x_s , y_s \rangle_s = e^{-st}\int_{\R} x_s(r+t)\overline{y_s(r)} \, dr$$ is measurable for all $x,y \in \F$.

Thus by Theorem \ref{sgthm}, the direct integral $B := \int_{\Omega}^{\oplus} A_s \, d\mu(s)$ generates a $C_0$-semigroup $S^{(\cdot)}$ given by $S^t = \int_\Omega^\oplus T_s^t \, d\mu(s)$ for all $t\ge0$. Of course, one can also follow the same computation that is done in general terms in the proof of Theorem \ref{sgthm} to show this directly. In this simple example, $B$ can be thought of as encoding the range of a continuous pertubation of the differentiation operator. Note that it is essential that $\Omega$ be bounded below to ensure the re-scaled semigroups are uniformly exponentially bounded.

The second example is the case where the family of closed operators $\{A_s:D(A_s)\subset H_s \to H_s : s\in \Omega\}$ is a bounded measurable field of operators. In this case, exponentiation yields a family of uniformly bounded norm-continuous $C_0$-semigroups $\{\exp(\cdot A_s) : \R_+\to\B(H_s) : s\in\Omega\}$ (in fact, $C_0$-groups). From this point, we can follow two separate routes to obtain a decomposable $C_0$-semigroup on the direct integral space. The first is, of course, Theorem \ref{sgthm}. The second, more interesting method, is to apply functional calculus. We define the direct integral $A= \int_{\Omega}^{\oplus} A_s \, d\mu(s)$ which is also a bounded operator. From here, we can apply the Riesz-Dunford functional calculus to the exponential function $\exp(t\cdot)$ for each $t \ge0$. This is clearly holomorphic on an open neighbourhood containing $\sigma(A_s)$ for a.e.\ $s\in\Omega$. Using Gilfeather's result \cite[Theorem 1]{Gil}, we see that
$$\exp(tA) = \int_\Omega^\oplus \exp(tA_s) \, d\mu(s).$$

We end this section by mentioning the setting where $\mu$ is the counting measure. Here, the theory of direct integrals reduces to that of direct sums which is dealt with in \cite{LM}, where the theory is developed for Banach spaces more generally and not restricted to Hilbert spaces. In particular, an example of a stochastic particle system is used to show that the direct sum equivalent of Theorem \ref{sgthm}, \cite[Theorem 4.3]{LM}, automatically upgrades `separately weak' solutions to `strong solutions', resulting in an extra degree of differentiability for certian initial value problems.

It remains an interesting avenue of exploration to see if there are physical applications of Theorem \ref{sgthm} that require more than the scenario where the individual fibres of the Hilbert space bundle are equal to a fixed Hilbert space. It may be that more applications can be found, perhaps in the flavour of the stochastic example in \cite{LM} if our theory can be extended to the Banach space setting. In particular, the motivating link between $C_0$-semigroups and Cauchy problems lends itself to the hope that useful applications remain waiting to be found. Note that direct integrals of Banach spaces have been studied under the rubric of randomly normed spaces in \cite{HLR}.

\section{Asymptotics of Direct Integral Semigroups}\label{sec:7}

We motivate this final section by beginning with and stating a result concerning the stability of a uniformly bounded countable sequence of $C_0$-semigroups on Hilbert spaces from \cite{MaNa} with an added assumption that we believe is necessary. %which we believe, as stated in \cite{MaNa}, is partially incorrect.

\begin{theorem}[{\cite[Theorem 3.2]{MaNa}}]\label{kekw}
	Let $\{T_n(t) : \R_+ \to \B(H_n) : n \in \N\}$ be a uniformly bounded sequence of $C_0$-semigroups with corresponding generators $A_n$ such that for all $n\in \N, i\R\subset \rho(A_n)$. Further assume that \begin{equation}\label{uni0}
	\sup_{n\in\N}\|A_n^{-1}\|_{H_n} < \infty.
	\end{equation}  Then for a fixed $\alpha >0$ the following conditions are equivalent:
	\begin{enumerate}[(i)]
		\item $\displaystyle\sup_{|r|\geq1,n\in\N} |r|^{-\alpha}\|R(ir,A_n)\|_{H_n} < \infty.$
		\item $\displaystyle\sup_{t\geq 0, n\in\N}\|t^{1/\alpha} T_n(t)A_n^{-1}\|_{H_n} < \infty.$
	\end{enumerate}
\end{theorem}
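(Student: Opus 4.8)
The plan is to reduce the statement to the single-operator quantified decay theorem of Borichev and Tomilov \cite{BoTo} by passing to the direct integral over the counting measure on $\N$. Set $H=\int_\N^\oplus H_n\,d\mu(n)=\bigoplus_n H_n$ and $A=\int_\N^\oplus A_n\,d\mu(n)=\bigoplus_n A_n$. Since $\{T_n(\cdot)\}$ is uniformly bounded, say by $M$, it is uniformly exponentially bounded with $\omega=0$, so Theorem \ref{sgthm} applies and $A$ generates the bounded $C_0$-semigroup $T(t)=\int_\N^\oplus T_n(t)\,d\mu(n)$ with $\|T(t)\|\le M$. By Corollary \ref{specprop}(i) together with \eqref{uni0} one has $0\in\rho(A)$, so that $A^{-1}=\int_\N^\oplus A_n^{-1}\,d\mu(n)$ is bounded, and Proposition \ref{bddprop} identifies, for counting measure, the norm of a decomposable operator with the supremum over the fibres. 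In particular $\|R(ir,A)\|=\sup_n\|R(ir,A_n)\|$ (where finite) and $\|T(t)A^{-1}\|=\sup_n\|T_n(t)A_n^{-1}\|$.

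Under this dictionary the two conditions become statements about the single operator $A$. Condition (ii) is equivalent to $\sup_{t\ge0}\|t^{1/\alpha}T(t)A^{-1}\|<\infty$, that is, $\|T(t)A^{-1}\|=O(t^{-1/\alpha})$ for large $t$, the small-$t$ range being automatically controlled by $\|T(t)A^{-1}\|\le M\sup_n\|A_n^{-1}\|$; and condition (i) is equivalent to $ir\in\rho(A)$ for $|r|\ge 1$ together with $\|R(ir,A)\|=O(|r|^\alpha)$ as $|r|\to\infty$. The Borichev--Tomilov theorem \cite{BoTo} then yields exactly the equivalence (i)$\iff$(ii), \emph{provided} that $A$ generates a bounded semigroup with $i\R\subset\rho(A)$. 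The boundedness is already in hand, so the whole argument hinges on establishing $i\R\subset\rho(A)$.

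By Corollary \ref{specprop}, $ir\in\rho(A)$ is equivalent to $ir\in\rho(A_n)$ for every $n$ (given by hypothesis) together with $\sup_n\|R(ir,A_n)\|<\infty$; thus the task is to bound the fibre resolvents uniformly in $n$ along the whole imaginary axis. For $|r|\ge 1$ this is precisely condition (i). Near the origin one can expand $R(ir,A_n)$ in a Neumann series about $0$, using $\|R(0,A_n)\|=\|A_n^{-1}\|\le C_0:=\sup_n\|A_n^{-1}\|$ from \eqref{uni0}, to obtain $\|R(ir,A_n)\|\le 2C_0$ uniformly for $|r|\le 1/(2C_0)$.

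I expect the main obstacle to be the \emph{intermediate} band of the imaginary axis, on which neither the large-$|r|$ bound of (i) nor the Neumann expansion about $0$ applies directly, and on which one must leverage \eqref{uni0} together with the uniform boundedness of the semigroups to produce a bound on $\sup_n\|R(ir,A_n)\|$ that is uniform in $n$. This is the genuinely delicate point: although each $A_n$ satisfies $i\R\subset\rho(A_n)$, uniformity as $n\to\infty$ on a fixed compact band is not a formal consequence of control at the origin and at infinity alone, and one should check carefully whether \eqref{uni0} really closes this gap (a suitable Phragmén--Lindel\"of argument on $\overline{\mathbb{C}_+}$, exploiting $\|R(\lambda,A_n)\|\le M/\real\lambda$ on the right half-plane, is the natural tool, and I would scrutinise whether the endpoint data suffice). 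Once a uniform bound on such a band is secured, Proposition \ref{denseresolvent} and continuity of the resolvent promote the rational-parameter estimates to all of $i\R$, giving $i\R\subset\rho(A)$ and completing the reduction to \cite{BoTo}.
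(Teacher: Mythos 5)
Your reduction to Borichev--Tomilov via the direct sum $A=\bigoplus_n A_n$ is exactly the paper's strategy (it is how Theorem \ref{diap} is proved, and this statement is meant to be its counting-measure specialisation), and your dictionary between (i), (ii) and resolvent/orbit bounds for $A$ is correct. The important point, however, is that the obstacle you flagged is not a removable difficulty in your write-up: it is a genuine gap, and it cannot be closed, because the implication (i)$\Rightarrow$(ii) is \emph{false} under the stated hypotheses. Take $H_n=\C$ and $A_n=\tfrac{i}{2}-\tfrac{1}{n}$, so that $T_n(t)=e^{(i/2-1/n)t}$ is a contraction semigroup and $i\R\subset\rho(A_n)$ for every $n$. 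Then $\|A_n^{-1}\|=(\tfrac14+\tfrac1{n^2})^{-1/2}\le 2$, so (\ref{uni0}) holds, and $\|R(ir,A_n)\|=\bigl|\tfrac1n+i(r-\tfrac12)\bigr|^{-1}\le 2/|r|$ for $|r|\ge 1$, so condition (i) holds with constant $2$ for any $\alpha>0$. Yet $t^{1/\alpha}\|T_n(t)A_n^{-1}\|\ge \tfrac{2}{\sqrt5}\,t^{1/\alpha}e^{-t/n}$, which at $t=n$ tends to infinity, so condition (ii) fails. This is the paper's own example $A_n=-1/n$ translated up the axis into precisely the intermediate band that neither your Neumann expansion at $0$ nor condition (i) can reach; here $\|R(i/2,A_n)\|=n$, so $i/2\in\sigma(A)$ for the direct sum and Borichev--Tomilov is simply not applicable. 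No Phragm\'en--Lindel\"of argument can rescue this, since the endpoint data genuinely do not control the band $0<|r|<1$.

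For comparison, what the paper actually proves is the equivalence under the stronger hypothesis (\ref{ubia}), namely $\sup_n\|R(ir,A_n)\|<\infty$ for \emph{every} $r\in\R$: that is Theorem \ref{diap}, where $i\R\subset\rho(A)$ is immediate from Corollary \ref{specprop} and Proposition \ref{bddprop}, after which the argument is word for word your dictionary plus \cite[Theorem 2.4]{BoTo}. Under (\ref{uni0}) alone, only the direction (ii)$\Rightarrow$(i) survives, and the tool needed there is not Phragm\'en--Lindel\"of but \cite[Proposition 1.3]{BaDu}: since $0\in\rho(A)$ and (ii) forces $\|T(t)A^{-1}\|\to0$, that proposition yields $i\R\subset\rho(A)$ together with the uniform fibre bounds, i.e.\ it upgrades (\ref{uni0}) to (\ref{ubia}), and Borichev--Tomilov then gives (i). Your proposal only ever derives $i\R\subset\rho(A)$ from condition (i), so even this salvageable direction needs the Batty--Duyckaerts input that your argument is missing. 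In short: your suspicion about the intermediate band is vindicated, the statement as given (with (\ref{uni0}) only) should be regarded as incorrect, and the honest formulation is the one with hypothesis (\ref{ubia}), which the paper itself identifies as the natural assumption.
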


The assumption (\ref{uni0}) is missing from the statement of \cite[Theorem 3.2]{MaNa} as found in \cite{MaNa}. It is ambiguous as to whether or not the authors of that paper implicitly assume this condition in \cite[Theorem 3.2]{MaNa} without explicitly stating it. The introduction, preparatory work in Section 2, and application in Theorem 4.4 of their paper indicate that they are, in fact, assuming (\ref{uni0}). However, their statement of \cite[Proposition 3.6]{MaNa} seems to indicate otherwise. In any case, we both believe that the authors of \cite{MaNa} merely accidentally left out (\ref{uni0}) as an assumption in \cite[Theorem 3.2]{MaNa} and use the following example to show that the theorem fails if it is not assumed. We drop the subscripts for the norms as the context provides enough clarity.

\begin{example}
	Take $A_n = -1/n$ to be the multiplication operator acting on $H_n = \C$ for all $n\in \N$. Then $A_n$ generates the multiplication semigroup $T_n(t) = e^{-t/n}$ on $H_n$ for all $n\in \N$ which is uniformly bounded by $1$. Moreover,
	$ir-A_n = ir+1/n$ which has inverse $(ir+1/n)^{-1}$ for all $r\in \R, n\in \N$ so that $i\R\subset \rho(A_n)$ for all $n \in \N$. Condition (i) of Theorem \ref{kekw}  is satisfied since
	$$|r|^{-\alpha}\|R(ir,A_n)\| = |r|^{-\alpha}|ir + 1/n|^{-1} \leq |r|^{-(1+\alpha)}, \quad (n \in \N, |r| \geq 1).$$ However condition (ii) of Theorem \ref{kekw} is not satisfied since for $t=1$,
	$$\|1^{1/\alpha}T_n(1)A_n^{-1}\| = \|e^{-1/n}n\|\geq e^{-1}n$$ so that $\displaystyle\sup_{t\geq0, n\in \N}\|t^{1/\alpha}T_n(t)A_n^{-1}\| = \infty$.
\end{example}

Thus, (\ref{uni0}) cannot be omitted and furthermore, we believe it is just as natural to assume instead the stronger condition
\begin{equation}\label{ubia}\sup_{n\in\N}\|R(ir,A_n)\| < \infty, \quad (r\in\R),\end{equation} which we will explain after the following theorem which generalises \cite[Theorem 3.2]{MaNa} with the correct assumptions. We will prove the theorem using a much simpler argument than the one found in \cite{MaNa}, while still using \cite[Theorem 2.4]{BoTo} as in \cite{MaNa}.

\begin{theorem}\label{diap}
	%If $A = \int_{\Omega}^{\oplus} A(r)\ d\mu(r)$ generates $T(t)$
	Let $\{T^{(\cdot)}(s) : \R_+ \to \B(H_s) : s \in \Omega\}$ be a collection of $C_0$-semigroups with generators $A(s)$ such that for each $t\geq0,$ the direct integral of $\{T^t(s): s\in\Omega\}$ exists as a bounded operator on $H = \int_{\Omega}^\oplus H_s\, d\mu(s)$ and $$\underset{s\in \Omega}{\esssup} \sup_{t\geq0}\|T^t(s)\|_{H_s} <\infty.$$ Further assume that $i\R \subset\rho(A(s))$ for a.e.\ $s\in\Omega$ and $$\underset{s\in \Omega}{\esssup}\|R(ir,A(s))\|_{H_s} <\infty, \quad (r\in\R).$$ Then the following are equivalent:
	\begin{enumerate}[(i)]
		\item $\underset{s\in \Omega}{\esssup}\displaystyle\sup_{|r|\geq1}|r|^{-\alpha}\|R(ir,A(s))\|_{H_s} <\infty$.
		\item $\underset{s\in \Omega}{\esssup} \displaystyle\sup_{t\geq0}\|t^{1/\alpha}T^t(s)A(s)^{-1}\|_{H_s} <\infty.$
	\end{enumerate}
\end{theorem}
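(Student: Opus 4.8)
The plan is to reduce the theorem to a \emph{uniform, quantitative} version of the Borichev--Tomilov theorem \cite[Theorem 2.4]{BoTo} applied separately on each fibre $H_s$. Both (i) and (ii) are essential suprema over $s$ of fibrewise quantities, and for a.e.\ $s$ the semigroup $T^{(\cdot)}(s)$ is bounded (by $M := \esssup_{s}\sup_{t\geq0}\|T^t(s)\|_{H_s}$) with $i\R\subset\rho(A(s))$, so \cite[Theorem 2.4]{BoTo} applies on each fibre. The equivalence (i)$\Leftrightarrow$(ii) will then follow provided the constant relating the two conditions on a fixed fibre can be chosen to depend only on $M$, on $\alpha$, on the fibrewise constant in the hypothesis being used, and on a uniform bound for the resolvent near the origin. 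Producing that last uniform bound is the first task, and verifying the uniformity of the Borichev--Tomilov constants is the crux.

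First I would pass to the direct integral operator. By Theorem \ref{sgthm}, $A = \int_\Omega^\oplus A(s)\,d\mu(s)$ generates the bounded $C_0$-semigroup $T(t)=\int_\Omega^\oplus T^t(s)\,d\mu(s)$ with $\|T(t)\|\leq M$. The hypothesis that $\esssup_s\|R(ir,A(s))\|_{H_s}<\infty$ for each $r\in\R$, together with Corollary \ref{specprop}(i) and Proposition \ref{bddprop}, gives $ir\in\rho(A)$ with $\|R(ir,A)\|=\esssup_s\|R(ir,A(s))\|_{H_s}$ for every $r$; hence $i\R\subset\rho(A)$. Since the resolvent is continuous on $\rho(A)$, the function $r\mapsto\|R(ir,A)\|$ is bounded on the compact set $[-1,1]$, say by $C_0$. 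Reading this off at each rational $r\in[-1,1]$ and discarding the (countable) common exceptional null set, then invoking Proposition \ref{denseresolvent} fibrewise on the dense set $i(\Q\cap[-1,1])\subset i[-1,1]$, I obtain
$$\esssup_{s\in\Omega}\ \sup_{|r|\leq1}\|R(ir,A(s))\|_{H_s}\ \leq\ C_0\ <\ \infty,$$
and in particular $\|A(s)^{-1}\|_{H_s}=\|R(0,A(s))\|_{H_s}\leq C_0$ for a.e.\ $s$. Thus, modulo a null set, every fibre carries a bounded semigroup with $i\R$ in its resolvent set and a single uniform resolvent bound $C_0$ on $\{|r|\leq1\}$.

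With this uniform data in hand, I would apply \cite[Theorem 2.4]{BoTo} fibrewise in quantitative form: for a.e.\ $s$,
$$\sup_{|r|\geq1}|r|^{-\alpha}\|R(ir,A(s))\|_{H_s}<\infty\quad\Longleftrightarrow\quad\sup_{t\geq0}\|t^{1/\alpha}T^t(s)A(s)^{-1}\|_{H_s}<\infty,$$
where the bound on either side is majorised by a single function $\Phi$ of $M$, $C_0$, $\alpha$ and the bound on the other side alone. Granting this, (i)$\Rightarrow$(ii) is immediate: if the left-hand quantity is $\leq C_1$ for a.e.\ $s$, then the right-hand quantity is $\leq\Phi(M,C_0,\alpha,C_1)$ for a.e.\ $s$, whence its essential supremum is finite; the implication (ii)$\Rightarrow$(i) is the symmetric statement. (Measurability of the two fibrewise suprema, needed for the essential suprema to make sense, follows by replacing $\sup_{|r|\geq1}$ and $\sup_{t\geq0}$ by suprema over rational arguments, using continuity of the resolvent in $r$ and strong continuity in $t$.)

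The main obstacle is precisely the quantitative and uniform form of \cite[Theorem 2.4]{BoTo}: the bare statement is asymptotic (``$O$'' as $|r|,t\to\infty$), so I must check that the implicit constants are \emph{universal}, depending only on $M$, $C_0$, $\alpha$ and the opposite rate constant, rather than on the particular fibre. This is where I expect the real work: either reading the constants off the proof of \cite[Theorem 2.4]{BoTo} (the Plancherel/Gearhart--Pr\"uss estimate underlying the resolvent-to-decay direction is manifestly quantitative, and the reverse implication is handled by a similarly explicit estimate), or restating that theorem in quantitative form beforehand. The behaviour for bounded $t$ (resp.\ for $|r|\geq1$ in a bounded range) is harmless, being controlled by $M$ and $C_0$, so only the genuine large-parameter asymptotics require the uniform constant.
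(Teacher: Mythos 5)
Your strategy is genuinely different from the paper's, and the difference matters. The paper never applies Borichev--Tomilov fibrewise: it invokes \cite[Theorem 2.4]{BoTo} exactly once, applied to the single semigroup $T(t)=\int_\Omega^\oplus T^t(s)\,d\mu(s)$ on $H$ with generator $A=\int_\Omega^\oplus A(s)\,d\mu(s)$ (both supplied by Theorem \ref{sgthm}). The observation you are missing is that Proposition \ref{bddprop} converts essential suprema of fibre norms into operator norms on $H$: since $R(ir,A)=\int_\Omega^\oplus R(ir,A(s))\,d\mu(s)$ and $T(t)A^{-1}=\int_\Omega^\oplus T^t(s)A(s)^{-1}\,d\mu(s)$, condition (i) is (up to the usual countable-dense-set interchange of $\sup$ and essential supremum, which the paper glosses and you spell out) exactly the statement $\sup_{|r|\geq1}|r|^{-\alpha}\|R(ir,A)\|<\infty$, and condition (ii) is exactly $\sup_{t\geq0}\|t^{1/\alpha}T(t)A^{-1}\|<\infty$. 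These are the two sides of Borichev--Tomilov for the one semigroup $T$ on the one Hilbert space $H$, used as a qualitative black box; no uniformity of constants across fibres ever enters the argument.

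By contrast, the crux of your proposal --- a quantitative version of \cite[Theorem 2.4]{BoTo} whose implicit constants depend only on $M$, $C_0$, $\alpha$ and the bound on the opposite side --- is asserted rather than established. You are right that this is where the real work lies, and the claim is in fact true (the constants in both directions of Borichev--Tomilov can be traced through the proof and have the dependence you need), so your route can be completed; your preliminary step producing the uniform bound $C_0$ on $\{|r|\leq1\}$ via rationals, a common null set, and Proposition \ref{denseresolvent} is also correct, and is the same kind of argument the paper uses in proving Theorem \ref{sgthmconv}. But completing your route means reworking or re-proving the cited theorem in effective form, a substantial piece of analysis, whereas the paper's use of Proposition \ref{bddprop} makes the entire uniformity issue evaporate. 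As written, your proof is therefore conditional on an unproven (if folklore-true) strengthening of \cite[Theorem 2.4]{BoTo}, and that is a genuine gap, albeit one you identified and could close with effort that the paper's approach shows to be unnecessary.
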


\begin{proof}
	Dropping the subscripts on the norms, there exists $M>0$ such that $\|T^t(s)\| \leq M$ for all $t\geq 0$ and a.e.\ $s$. Thus,
	$$T(t) = \int_\Omega^\oplus T^t(s)\, d\mu(s)$$ is a $C_0$-semigroup on $H$ bounded by $M$ with generator $$A = \int_{\Omega}^{\oplus} A(s)\, d\mu(s)$$ by Theorem \ref{sgthm}. Moreover, $i\R\subset\rho(A)$ by Corollary \ref{specprop}.
	
	Assuming (i), we see that \begin{equation}\label{diai}\displaystyle\sup_{|r|\geq 1}|r|^{-\alpha}\|R(ir,A)\| <\infty.\end{equation} Thus by \cite[Theorem 2.4]{BoTo},  \begin{equation}\label{diaii}\displaystyle\sup_{t\geq0}\|t^{1/\alpha}T(t)A^{-1}\|<\infty.\end{equation} Since
	$$T(t)A^{-1} = \int_{\Omega}^{\oplus}T^t(s)A(s)^{-1}\, d\mu(s),$$ (ii) follows. Now assuming (ii), we similarly have (\ref{diaii}), which implies (\ref{diai}) by \cite[Theorem 2.4]{BoTo} once again. Since
	$$R(ir,A) = \int_{\Omega}^{\oplus}R(ir,A(s))\, d\mu(s), \quad (r\in\R),$$ (i) follows.
\end{proof}

Thus, \cite[Theorem 3.2]{MaNa} with the additional assumption (\ref{ubia}) follows by taking the counting measure. In fact, the only theory necessary in this specific discrete case is that of direct sums, covered in \cite{LM}. We also see from the above proof that if we only assume $$\underset{s\in \Omega}{\esssup}\|A(s)^{-1}\|_{H_s} <\infty, \quad (s\in\Omega),$$ rather than the a.e.\ uniform resolvent bounds along the whole imaginary axis, we would still have $0\in\rho(A)$ and hence, in this case, if Theorem \ref{diap}(ii) holds, then (\ref{diaii}) holds as well. Hence \cite[Proposition 1.3]{BaDu} would imply that $i\R \subset \rho(A)$ and further that $$\underset{s\in \Omega}{\esssup}\|R(ir,A(s))\|_{H_s} <\infty, \quad (r\in\R),$$ demonstrating that the stronger condition (\ref{ubia}) is indeed just as natural to assume as (\ref{uni0}). %, where $A, (T(t))_{t\geq0}$ are as in the above proof

The method of passing to a direct integral (or sum) in order to obtain the relation between uniform resolvent bounds and `uniform strong' stability can be applied to other quantified Tauberian theorems such as those found in \cite{BaDu,BoTo,RSS} (strongly continuous) and \cite{Sei16} (discrete).

\providecommand{\bysame}{\leavevmode\hbox to3em{\hrulefill}\thinspace}
\providecommand{\MR}{\relax\ifhmode\unskip\space\fi MR }
% \MRhref is called by the amsart/book/proc definition of \MR.
\providecommand{\MRhref}[2]{%
	\href{http://www.ams.org/mathscinet-getitem?mr=#1}{#2}
}
\providecommand{\href}[2]{#2}

\end{document}